\title{The logic of the reverse mathematics zoo}
\author{Giovanna D'Agostino}
 \address{Dipartimento di Matematica e Informatica,
    Universit\`{a} di Udine,
    viale delle Scienze 206,
    33100 Udine,
    Italy}
\email{giovanna.dagostino@uniud.it}
\author{Alberto Marcone}
 \address{Dipartimento di Matematica e Informatica,
    Universit\`{a} di Udine,
    viale delle Scienze 206,
    33100 Udine,
    Italy}
\email{alberto.marcone@uniud.it}
\urladdr{http://users.dimi.uniud.it/~alberto.marcone/}
\thanks{Marcone's research was supported by PRIN 2012 Grant ``Logica, Modelli e Insiemi''.\\
D'Agostino's research was supported by the GNCS-INDAM Project ``Algoritmica per il model checking e la sintesi di sistemi safety-critica''.}
\date{December 22, 2015}
\newtheorem{theorem}{Theorem}
\newtheorem{lemma}[theorem]{Lemma}
\newtheorem{definition}[theorem]{Definition}
\newtheorem{corollary}[theorem]{Corollary}
\theoremstyle{remark}
\newtheorem{example}[theorem]{Example}
\newcommand{\pl}{:\!\!-\ }
\begin{document}
\pagestyle{plain}
\begin{abstract}
Building on previous work by Mummert, Saadaoui and Sovine (\cite{Mummert}),
we study the logic underlying the web of implications and nonimplications
which constitute the so called reverse mathematics zoo. We introduce a
tableaux system for this logic and natural deduction systems for important
fragments of the language.
\end{abstract}

\maketitle

\tableofcontents

\section{Introduction}

Reverse mathematics is a wide ranging research program in the foundations of
mathematics: its goal is to systematically compare the strength of
mathematical theorems by establishing equivalences, implications and
nonimplications over a weak base theory. Currently, reverse mathematics is
carried out mostly in the context of subsystems of second-order arithmetic
and very often a specific system known as $\mathsf{RCA}_0$ is used as the
base theory.

The earlier reverse mathematics research, leading to Steve Simpson's
fundamental monograph \cite{sosoa}, highlighted the fact that most
mathematical theorems formalizable in second order arithmetic were in fact
either provable within $\mathsf{RCA}_0$ or equivalent to one of four other
specific subsystems, linearly ordered in terms of provability strength. This
is summarized by the \emph{Big Five} terminology coined by Antonio Montalb\'{a}n
in \cite{Mont}. However in recent years there has been a change in the
reverse mathematics main focus: following Seetapun's breakthrough result that
Ramsey theorem for pairs is not equivalent to any of the Big Five systems, a
plethora of statements, mostly in countable combinatorics, have been shown to
form a rich and complex web of implications and nonimplications. The first
paper featuring complex and non-linear diagrams representing statements of
second order arithmetics appears to be \cite{HS} (notice that the diagrams
appearing in \cite{wqo,interval} are of a different sort, as they deal with
properties of mathematical objects, rather than with mathematical
statements). Nowadays diagrams of this kind are a common feature of reverse
mathematics papers. This is called the zoo of reverse mathematics, a
terminology coined by Damir Dzhafarov when he designed \lq\lq a program to
help organize relations among various mathematical principles, particularly
those that fail to be equivalent to any of the big five subsystems of
second-order arithmetic\rq\rq. This program is available at \cite{rmzoo}.
Ludovic Patey's web site features a manually maintained zoo
(\cite{Pateyzoo}). The recent monograph \cite{Hirschfeldt}, devoted to a
small portion of the zoo, features a whole chapter of diagrams. These
diagrams cover also situations where a different base theory (e.g.\
$\mathsf{RCA}$, which is $\mathsf{RCA}_0$ with unrestricted induction) is
used, or where only the first order consequences are considered.

Actually, the zoo is not peculiar to subsystems of second order arithmetic.
For example, the study of weak forms of the Axiom of Choice and the
relationships between them has a long tradition in set theory: \cite{AC}
consists of a catalog of 383 forms of the Axiom of Choice and of their
equivalent statements. Connected to the book, there is also the web page
\cite{ACwww}, which claims also to be able to produce zoo-like tables;
unfortunately the site appears to be no longer maintained and, as of December
2015, the links are broken.

Mummert, Saadaoui and Sovine in \cite{Mummert} introduced a framework for
discussing the logic that is behind the web of implications and
nonimplications in the reverse mathematics zoo. They called their system
s-logic, introducing its syntax and semantics and proposing a tableaux system
for satisfiability of sets of s-formulas, and inference systems for two
fragments of s-logic (called $\mathcal{F}_1$ and $\mathcal{F}_2$, with the
first a subset of the second) that are important in the applications.

The present paper can be viewed as a continuation of \cite{Mummert}. Our goal
is to improve the systems introduced by Mummert, Saadaoui and Sovine and show
how widespread automated theorem proving tools can be used to deal
efficiently with s-logic. As a byproduct, our analysis also points out that,
notwithstanding the fact that the semantics for s-logic borrows some ideas
from the one for modal logic, s-logic is actually much closer to
propositional logic than to modal logic.

Here is the plan of the paper. After reviewing s-logic, in Section
\ref{sec:obs} we make some observations about its semantics. Using these, in
Section \ref{sec:tableau} we are able to simplify the tableaux system of
Mummert, Saadaoui and Sovine. Our formulation brings it closer to the
familiar tableaux systems for propositional logic, and thus, using an
efficient implementation of the latter, leads to more efficient algorithms.
Moreover, in Section \ref{sec:nd}, we improve also the treatment of the
fragments $\mathcal{F}_1$ and $\mathcal{F}_2$ by proposing natural deduction
systems for them. We also consider a new natural fragment of s-logic
$\mathcal{F}_3$, which includes $\mathcal{F}_2$ and for which we provide a
sound and complete natural deduction system. In Section \ref{sec:prolog} we
show how logical consequence between formulas of $\mathcal{F}_2$ (and hence
of $\mathcal{F}_1$) can be treated by using standard propositional Prolog:
this provides an efficient way of answering queries about whether a certain
implication or nonimplication follows from a database of known zoo facts.

\section{Basic observations about s-logic}\label{sec:obs}

For the reader's convenience, we start with a brief review of s-logic as
introduced in \cite{Mummert}.

We start from a set of propositional variables and we build propositional
formulas in the usual way, using the connectives $\neg$, $\land$, $\vee$,
and $\rightarrow$. An \emph{s-formula} is a formula of the form $A \strictif
B$ or $A \not\strictif B$, where $A$ and $B$ are propositional formulas. The
first type of s-formula is called positive or $\strictif$ s-formula, the
second one is negative or $\not\strictif$ s-formula. Notice that the
definition of s-formula is not recursive, and thus if $\alpha$ and $\beta$
are s-formulas neither $\alpha \land \beta$ nor $\alpha \strictif \beta$ are
s-formulas.

The intended meaning of $A \strictif B$ is that statement $A$ implies
statement $B$, over the fixed weak base theory. On the other hand $A
\not\strictif B$ asserts that $A \strictif B$ does not hold. In practice,
this happens when we have a model of the base theory in which $A$ holds and
$B$ does not (a counterexample to $A \strictif B$).

The semantics of s-logic is based on the notion of \emph{frame}, which is
just a nonempty set of valuations. Here by valuation we mean the usual notion
for propositional logic, i.e.\ a function assigning to every propositional
variable one of the truth values $T$ and $F$.

A frame $W$ \emph{satisfies} the positive s-formula $A \strictif B$ if for
every valuation $v \in W$ such that $v(A) = T$ we have also $v(B) = T$. $W$
satisfies the negative s-formula $A \not\strictif B$ if there exists a
valuation $v \in W$ such that $v(A) = T$ and $v(B) = F$.

Once we have the notion of satisfaction we can introduce in the usual way
notions such as \emph{satisfiability} of a set of s-formulas $\Gamma$ (there
exists a frame satisfying every member of $\Gamma$) and \emph{logical
consequence} between a set of s-formulas $\Gamma$ and a given s-formula
$\alpha$ (every frame satisfying $\Gamma$ satisfies also $\alpha$): for the
latter we use the notation $\Gamma \models_s \alpha$.

We point out that although $\rightarrow$ and $\strictif$ (and their
negations) are superficially similar, there are important difference between
them. For example, if $X$ and $Y$ are propositional variables, the set of
s-formulas $\{X \not\strictif Y, Y \not\strictif X\}$ is satisfiable (by a
frame with two valuations), while the \lq\lq corresponding\rq\rq\ set of
propositional formulas $\{\neg (X \rightarrow Y), \neg (Y \rightarrow X)\}$
is unsatisfiable. Expressing the same example in terms of logical
consequence, we have that although $\neg (X \rightarrow Y) \models Y
\rightarrow X$ in propositional logic, it is certainly not the case that $X
\not\strictif Y \models_s Y \strictif X$. Notice that in these examples we
are using s-formulas from $\mathcal{F}_1$.\smallskip

Mummert, Saadaoui and Sovine introduced also the following fragments of
s-logic:

\begin{definition}\label{f1f2}
The fragment $\mathcal{F}_1$, $\mathcal{F}_2$ of s-logic are:
\begin{itemize}
\item $\mathcal{F}_1$ is the set of all s-formulas of the forms $X\strictif
    Y$ and $X\not \strictif Y$, where  $X,Y$ are propositional variables;
\item $\mathcal{F}_2$ is the set of all s-formulas of the forms $A\strictif
    Y$ and $A\not \strictif Y$, where $A$ is a nonempty conjunction of
  propositional variables and $Y$ is a single propositional variable.
\end{itemize}
\end{definition}

As pointed out in \cite{Mummert}, $\mathcal{F}_1$ captures the basic
implications and nonimplications in reverse mathematics, while in
$\mathcal{F}_2$ we can express also results such as the equivalence between
Ramsey Theorem for pairs with two colors and the conjunction between the same
theorem restricted to stable colorings and the cohesiveness principle. Notice
that we do not need to consider also s-formulas with conjunctions of
propositional variables after $\strictif$, as $\Gamma \models_s A \strictif X
\land Y$ if and only if $\Gamma \models_s A \strictif X$ and $\Gamma
\models_s A \strictif Y$, while $\Gamma \models_s A \not\strictif X \land Y$
if and only if $\Gamma \models_s A \not\strictif X$ or $\Gamma \models_s A
\not\strictif Y$.

We introduce another fragment of s-logic, which is a natural generalization
of the fragment ${\mathcal F_2}$, and captures some implications between
members of the reverse mathematics zoo escaping ${\mathcal F_2}$. Recall, for
example, that the statement about the existence of iterates of continuous
mappings of the closed unit interval into itself was proved in \cite{FSY} to
be equivalent to the disjunction of weak K\"{o}nig's lemma and
$\mathbf{\Sigma}^0_2$-induction.

\begin{definition}\label{f3}
$\mathcal{F}_3$ is the set of all s-formulas of the forms $C \strictif D$ and
$C \not\strictif D$, where $C$ and $D$ are a nonempty conjunction of
propositional variables and a nonempty disjunction of propositional
variables, respectively.
\end{definition}

Here we do not need to consider also s-formulas with disjunctions of
propositional variables before $\strictif$, as $\Gamma \models_s X \lor Y
\strictif A$ if and only if $\Gamma \models_s X \strictif A$ and $\Gamma
\models_s Y \strictif A$, while $\Gamma \models_s X \lor Y \not\strictif A$
if and only if $\Gamma \models_s X \not\strictif A$ or $\Gamma \models_s Y
\not\strictif A$.\smallskip

We now make a couple of useful basic observations about the semantics of
s-logic which use the following definition.

\begin{definition}
Given a set of s-formulas $\Gamma$, the set of s-formulas $\Gamma^+,
\Gamma^-$ are defined as
\[
\Gamma^+:=\{A\strictif B: A\strictif B \in \Gamma\},\quad
\Gamma^-:=\{A\not \strictif B: A \not\strictif B \in \Gamma\},
\]
while $\Gamma^+_{prop}$ is the set of propositional formulas
\[
\Gamma^+_{prop}:=\{A\rightarrow B: A\strictif B\in \Gamma\}.
\]
\end{definition}

\begin{lemma}\label{lemma:sat}
Let $\Gamma$ be a set of s-formulas. The following are equivalent:
\begin{enumerate}
  \item $\Gamma$ is satisfiable;
  \item the set of s-formulas
\[
\Gamma^+ \cup \{A \not \strictif B\}
\]
is satisfiable, for each $A \not \strictif B \in \Gamma^-$;
  \item the set of propositional formulas
\[
\Gamma^+_{prop} \cup \{A, \neg B\}
\]
is satisfiable (in the usual sense of propositional logic), for each $A
\not \strictif B \in \Gamma^-$.
\end{enumerate}
\end{lemma}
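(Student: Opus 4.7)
The plan is to prove the lemma by establishing the cycle $(1) \Rightarrow (2) \Rightarrow (3) \Rightarrow (1)$. The implication $(1) \Rightarrow (2)$ is immediate, since whenever $A \not\strictif B \in \Gamma^-$ the set $\Gamma^+ \cup \{A \not\strictif B\}$ is a subset of $\Gamma$, and every subset of a satisfiable set is satisfiable.

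For $(2) \Rightarrow (3)$, I would fix $A \not\strictif B \in \Gamma^-$ and a frame $W$ satisfying $\Gamma^+ \cup \{A \not\strictif B\}$. The negative s-formula furnishes a valuation $v \in W$ with $v(A)=T$ and $v(B)=F$. For each $C \strictif B' \in \Gamma^+$, the frame condition ensures that if $v(C) = T$ then $v(B') = T$; equivalently $v$ makes the propositional formula $C \rightarrow B'$ true. Thus this single valuation $v$ simultaneously satisfies every formula in $\Gamma^+_{prop}$ together with $A$ and $\neg B$, which is exactly what (3) demands.

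The conceptual heart of the argument is $(3) \Rightarrow (1)$, where the task is to glue together the per-formula propositional witnesses into one frame. For each $A \not\strictif B \in \Gamma^-$ let $v_{A,B}$ be a propositional valuation satisfying $\Gamma^+_{prop} \cup \{A, \neg B\}$, and set
\[
W := \{v_{A,B} : A \not\strictif B \in \Gamma^-\}.
\]
Assuming $\Gamma^- \neq \emptyset$, $W$ is a legitimate (nonempty) frame. Every negative formula in $\Gamma$ is witnessed in $W$ by construction. For each $C \strictif D \in \Gamma^+$, every valuation $v_{A,B} \in W$ makes $C \rightarrow D$ true, so the frame condition for $C \strictif D$ holds on $W$. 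The step whose idea I view as the true content of the lemma is precisely this one: the semantics permits using a different valuation as counterexample for each negative s-formula, so satisfiability of $\Gamma$ decouples into independent propositional satisfiability checks, one per member of $\Gamma^-$. The only subtle point to address is the degenerate case $\Gamma^- = \emptyset$, in which (2) and (3) are vacuously true and $\Gamma = \Gamma^+$ must be handled by an auxiliary remark (e.g.\ picking any valuation satisfying $\Gamma^+_{prop}$, when such a valuation exists).
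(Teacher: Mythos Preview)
Your proof is correct and follows essentially the same cycle $(1)\Rightarrow(2)\Rightarrow(3)\Rightarrow(1)$ as the paper, with the same construction of the frame $W$ from the per-formula witnessing valuations. You are in fact more careful than the paper in flagging the degenerate case $\Gamma^-=\emptyset$ (where the constructed $W$ would be empty but frames are required to be nonempty), a point the paper's proof glosses over.
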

\begin{proof}
(1) implies (2) is immediate.

To prove that (2) implies (3) fix $A \not\strictif B \in \Gamma^-$. Since
$\Gamma^+ \cup \{A \not\strictif B\}$ is satisfiable, there exists a frame
$W$ which validates this set of s-formulas; hence there exists a valuation $v
\in W$ with $v(A)=T$, $v(B)=F$. Since $W \models X \strictif Y $ for all $X
\strictif Y \in \Gamma^+$ we have that $v(X) = T$ implies $v(Y) =T$ for each
such s-formula. Hence $v$ satisfies the set of propositional formulas
$\Gamma^+_{prop} \cup \{ A, \neg B\}$.

For (3) implies (1), suppose (3) holds, and for each $A \not \strictif B \in
\Gamma^- $ let $w_{A \not \strictif B}$ be a valuation satisfying the set of
propositional formulas $\Gamma^+_{prop} \cup \{ A, \neg B\}$. Let $W$ be the
frame consisting of all these valuations: $W=\{w_{A \not \strictif B}: A \not
\strictif B \in \Gamma\}$. It is easily seen that $W$ satisfies $\Gamma$.
\end{proof}

\begin{corollary}\label{cor:unsat}
A set of s-formulas $\Gamma$ is unsatisfiable if and only if there exists $A
\not \strictif B\in \Gamma^-$ such that $\Gamma^+\models A \strictif B$. In
particular, every set of positive s-formulas is satisfiable.
\end{corollary}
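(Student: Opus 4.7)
The plan is to derive the corollary directly from the equivalence $(1) \Leftrightarrow (3)$ in Lemma \ref{lemma:sat} by taking contrapositives and translating propositional unsatisfiability into propositional (and hence s-logical) consequence.

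First, I would negate both sides of $(1) \Leftrightarrow (3)$: $\Gamma$ is unsatisfiable if and only if there exists some $A \not\strictif B \in \Gamma^-$ such that $\Gamma^+_{prop} \cup \{A, \neg B\}$ is propositionally unsatisfiable. By the standard deduction-style equivalence in propositional logic, this last condition is equivalent to $\Gamma^+_{prop} \models A \rightarrow B$.

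Second, I would need to bridge this propositional consequence with the s-logical consequence $\Gamma^+ \models_s A \strictif B$ appearing in the statement. One direction is immediate: if $\Gamma^+_{prop} \models A \rightarrow B$ and $W$ is any frame satisfying $\Gamma^+$, then every $v \in W$ satisfies each $X \rightarrow Y$ with $X \strictif Y \in \Gamma^+$ (since $W \models X \strictif Y$), hence satisfies $A \rightarrow B$, which means $W \models A \strictif B$. For the converse direction, given a propositional valuation $v$ satisfying $\Gamma^+_{prop} \cup \{A, \neg B\}$, the single-point frame $W = \{v\}$ satisfies $\Gamma^+$ but not $A \strictif B$, so $\Gamma^+ \not\models_s A \strictif B$. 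This is the one small step that is not purely formal, though it is straightforward.

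Finally, the ``in particular'' clause is immediate: when $\Gamma$ consists only of positive s-formulas, $\Gamma^-$ is empty, so the existential condition on the right-hand side of the biconditional is vacuously false and $\Gamma$ must be satisfiable. I expect no real obstacle here; the whole argument is essentially a bookkeeping exercise on top of Lemma \ref{lemma:sat}.
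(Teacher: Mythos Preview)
Your argument is correct and matches the paper's approach: the paper states the corollary without proof, treating it as immediate from Lemma~\ref{lemma:sat}, and your derivation via the contrapositive of $(1)\Leftrightarrow(3)$ is exactly the kind of unpacking intended. One could shorten it slightly by using $(1)\Leftrightarrow(2)$ instead, since $\Gamma^+\cup\{A\not\strictif B\}$ being unsatisfiable is by definition the same as $\Gamma^+\models_s A\strictif B$ (a frame satisfies $A\not\strictif B$ iff it fails to satisfy $A\strictif B$), which avoids the detour through propositional logic and the single-point frame; but your route is equally valid.
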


Lemma \ref{lemma:sat} suggests a fairly simple algorithm for the
satisfiability problem for sets of s-formulas. In fact given the set of
s-formulas $\Gamma$ one needs only to check whether for each $A \not
\strictif B \in \Gamma^-$ the set of propositional formulas $\Gamma^+_{prop}
\cup \{A, \neg B\}$ is satisfiable. Given the constant improvement in the
efficiency of SAT-solvers (see e.g. \cite{SAT1,SAT2}), this is in fact a
quite efficient way of dealing with the problem.

\begin{corollary}
The problem of satisfiability for a (finite) set of s-formulas has the same
complexity of propositional satisfiability, i.e.\ it is NP-complete.
\end{corollary}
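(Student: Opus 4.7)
The plan is to show both NP-hardness and membership in NP.

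For NP-hardness I would give a simple polynomial-time reduction from propositional satisfiability. Given a propositional formula $\varphi$, pick any propositional variable $X$ (either one occurring in $\varphi$ or a fresh one) and consider the singleton set of s-formulas $\Gamma_\varphi := \{\varphi \not\strictif (X \land \neg X)\}$. By the definition of satisfaction for a negative s-formula, $\Gamma_\varphi$ is satisfied by some frame if and only if some valuation $v$ has $v(\varphi) = T$ and $v(X \land \neg X) = F$; the second condition is automatic, so $\Gamma_\varphi$ is satisfiable in the s-logic sense exactly when $\varphi$ is satisfiable in the usual sense. Since the translation $\varphi \mapsto \Gamma_\varphi$ is clearly computable in linear time, propositional SAT reduces to s-satisfiability.

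For membership in NP I would invoke Lemma \ref{lemma:sat}. By the equivalence of (1) and (3), $\Gamma$ is satisfiable if and only if for every $A \not\strictif B \in \Gamma^-$ the propositional set $\Gamma^+_{prop} \cup \{A, \neg B\}$ is satisfiable. A nondeterministic polynomial-time algorithm can therefore guess, for each of the at most $|\Gamma|$ members of $\Gamma^-$, a valuation on the (finitely many) propositional variables occurring in $\Gamma$; each such valuation has size linear in the input, and checking that it satisfies $\Gamma^+_{prop} \cup \{A, \neg B\}$ takes polynomial time. The whole certificate and its verification are therefore of polynomial size and time.

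Combining the two directions gives NP-completeness. The only subtle point I anticipate is merely a presentational one: to make the NP upper bound explicit, one should note that although a frame may a priori contain arbitrarily many valuations, Lemma \ref{lemma:sat} lets us restrict attention to at most $|\Gamma^-|$ valuations, each depending on only the variables occurring in $\Gamma$; once this is observed, the polynomial-size certificate is obvious, and no further work is required.
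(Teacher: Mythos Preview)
Your proof is correct and follows essentially the same approach as the paper: both use Lemma~\ref{lemma:sat} to reduce the NP upper bound to finitely many propositional satisfiability instances (one per negative s-formula), and both obtain hardness from the fact that s-satisfiability subsumes propositional SAT. Your version is more explicit on the hardness side, giving the concrete reduction $\varphi \mapsto \{\varphi \not\strictif (X \land \neg X)\}$, whereas the paper simply remarks that the problem ``essentially contains propositional satisfiability''; otherwise the arguments coincide.
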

\begin{proof}
The problem is in NP because, if we fix a finite set of s-formulas $\Gamma$
and set $n = |\Gamma|$ and $k = |\Gamma^+|$, using the last point of the
previous Lemma, we can reduce the satisfiability of $\Gamma$ to the
satisfiability of $n-k$ sets of propositional formulas each of cardinality
$k+1$.

The problem is NP-complete because it essentially contains propositional
satisfiability.
\end{proof}

The previous corollary implies that with respect to complexity s-logic is
more similar to propositional logic than to modal logic (recall that
satisfiability for propositional logic is NP-complete, while satisfiability
for the modal logic K is PSPACE-complete).

Next, we consider logical consequence among s-formulas.

\begin{lemma}\label{lemma:logcons}
Let $\Gamma$ be a satisfiable set of s-formulas. For any propositional
formulas $A$ and $B$ we have:
\begin{enumerate}[(i)]
  \item $\Gamma \models_s A \strictif B$ if and only if  $\Gamma^+
      \models_s A \strictif B$ if and only if $\Gamma^+_{prop} \models A
      \rightarrow B$;
  \item $\Gamma \models_s A \not\strictif B$ if and only if there exists an
      s-formula $E \not\strictif F \in \Gamma^-$ such that
  \[\Gamma^+, A \strictif B \models_s E  \strictif F,\]
      if and only if there exists an s-formula $E \not\strictif F \in
      \Gamma^-$ such that
      \[\Gamma^+_{prop}, A \rightarrow B \models E \rightarrow F.\]
\end{enumerate}
\end{lemma}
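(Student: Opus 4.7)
The plan is to reduce both parts to propositional reasoning via the elementary correspondence: a frame $W$ satisfies a positive s-formula $X \strictif Y$ if and only if every valuation in $W$ satisfies the propositional implication $X \to Y$. From this one reads off, without any satisfiability assumption, the general equivalence $\Gamma^+ \models_s A \strictif B$ iff $\Gamma^+_{prop} \models A \to B$, since both sides say exactly that every valuation satisfying $\Gamma^+_{prop}$ also satisfies $A \to B$. This immediately gives the second equivalence in (i); applying the same observation to the set $\Gamma^+ \cup \{A \strictif B\}$ in place of $\Gamma^+$ gives the second equivalence in (ii).

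For the remaining equivalence in (i), the direction $\Gamma^+ \models_s A \strictif B \Rightarrow \Gamma \models_s A \strictif B$ is immediate since $\Gamma \supseteq \Gamma^+$. For the converse I would argue contrapositively: given a frame $W_0$ witnessing $\Gamma^+ \not\models_s A \strictif B$, I enlarge it by adjoining, for each $E \not\strictif F \in \Gamma^-$, a valuation $w_{E\not\strictif F}$ satisfying $\Gamma^+_{prop} \cup \{E, \neg F\}$; such valuations exist by Lemma~\ref{lemma:sat} applied to the satisfiability of $\Gamma$. The enlarged frame satisfies $\Gamma^+$ (its valuations all satisfy $\Gamma^+_{prop}$), satisfies every $E \not\strictif F \in \Gamma^-$ by construction, and still contains the valuation of $W_0$ refuting $A \strictif B$.

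For the remaining equivalence in (ii), the right-to-left direction is direct: assume some $E \not\strictif F \in \Gamma^-$ satisfies $\Gamma^+_{prop}, A \to B \models E \to F$ and let $W$ be a frame satisfying $\Gamma$; a valuation $v \in W$ witnessing $W \models E \not\strictif F$ satisfies $\Gamma^+_{prop}$ but refutes $E \to F$, hence refutes $A \to B$, showing $W \models A \not\strictif B$. The other direction goes by contradiction: if no $E \not\strictif F \in \Gamma^-$ works, then for each such formula pick a valuation $w_{E \not\strictif F}$ satisfying $\Gamma^+_{prop} \cup \{A \to B, E, \neg F\}$; the frame of all these valuations satisfies $\Gamma$ while also satisfying $A \strictif B$, contradicting $\Gamma \models_s A \not\strictif B$.

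The main conceptual step throughout is keeping straight the translation between frame-level satisfaction of s-formulas and valuation-level satisfaction of propositional formulas, and the canonical frames one builds closely mirror the construction from the proof of Lemma~\ref{lemma:sat}; once this correspondence is fixed, no individual step is technically difficult. A mild pitfall worth flagging is the nonemptiness of the constructed frames, automatic when $\Gamma^-$ is nonempty and otherwise handled directly from the satisfiability of $\Gamma$.
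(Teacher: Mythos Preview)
Your argument is correct. The difference from the paper is one of packaging rather than substance: the paper uniformly rewrites $\Gamma \models_s \alpha$ as ``$\Gamma \cup \{\overline{\alpha}\}$ is unsatisfiable'' (where $\overline{\alpha}$ flips $\strictif/\not\strictif$) and then invokes Lemma~\ref{lemma:sat} once, letting the satisfiability of $\Gamma$ single out the ``culprit'' negative s-formula. You instead work semantically, building explicit witnessing frames by hand (enlarging a given countermodel for (i), assembling the $w_{E\not\strictif F}$'s for (ii)) and using Lemma~\ref{lemma:sat} only to supply the individual valuations. The paper's route is shorter and more uniform; yours is more constructive and makes the countermodels visible, which is arguably pedagogically nicer and makes the role of the satisfiability hypothesis on $\Gamma$ more transparent. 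Your remark about nonemptiness of the constructed frames is well placed: the same edge case (when $\Gamma^-=\emptyset$) is glossed over in the paper's treatment of Lemma~\ref{lemma:sat} as well.
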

\begin{proof}
$(i)$ If $\Gamma \models_s A \strictif B$ then $\Gamma \cup \{A \not\strictif
B\}$ is unsatisfiable. By Lemma \ref{lemma:sat}, there exists $E
\not\strictif F \in \Gamma^-\cup\{A \not \strictif B\}$ such that
$\Gamma^+\cup \{ E \not\strictif F\}$ is unsatisfiable. Since $\Gamma$ is
satisfiable, $E \not\strictif F$ must be $A \not\strictif B$, and hence
$\Gamma^+\models_s A \strictif B$. The viceversa is obvious. The equivalence
between $\Gamma^+\models_s A \strictif B$ and $\Gamma^+_{prop} \models A
\rightarrow B$ follows easily from Lemma \ref{lemma:sat}.

As for $(ii)$, $\Gamma \models_s A \not\strictif B$ iff the set of s-formulas
$\Gamma \cup\{A \strictif B\}$ is unsatisfiable iff (by Lemma
\ref{lemma:sat}) there exists $E\not \strictif F \in \Gamma^-$ such that
$\Gamma^+ \cup\{A \strictif B\} \cup\{E\not \strictif F\}$ is unsatisfiable
iff there exists $E\not \strictif F \in \Gamma^-$ such that $\Gamma^+, A
\strictif B \models_s E  \strictif F$ iff $\Gamma^+_{prop}, A \rightarrow B
\models E \rightarrow F$.
\end{proof}

The previous Lemma says that only positive s-formulas are needed to check
whether a positive s-formula is logical consequence of a satisfiable set of
s-formulas. Moreover, if only positive s-formulas are considered, their logic
does not differ substantially from propositional logic, because $\strictif$
behaves exactly as $\rightarrow$.

If we want to prove that a negative s-formula is logical consequence of a
satisfiable set of s-formulas then differences with propositional logic do
appear. The previous Lemma tells us that the collection of $\not\strictif$
s-formulas which are logical consequences of some $\not\strictif$ s-formulas
(i.e.\ typically from the existence of different models showing that the
implications fail) and some $\strictif$ s-formulas is just the union of the
consequences of a single $\not\strictif$ s-formula and the given set of
$\strictif$ s-formulas. In other words, having two models available
simultaneously gives no new information. This might again suggest that
s-logic is not substantially different from propositional logic.
Nevertheless, the deductive meta-properties of s-logic and propositional
logic differ, as showed by the following example.

\begin{example}\label{ex1}
In propositional logic, if $A,B,C,D$ are propositional variables and $\alpha$
is a formula, we have: \[\Gamma, A\rightarrow C \models \alpha \quad \hbox{\
and\ } \quad \Gamma, B\rightarrow C \models \alpha \quad \hbox{\ then \ }
\quad \Gamma, A\land B \rightarrow C\models \alpha.\]

This is not the case in s-logic because, for example: \vskip5pt
\[A\not \strictif D, B\not \strictif D, A \strictif C \models_s C\not \strictif D,\]
\[A\not \strictif D, B\not \strictif D,  B \strictif C\models_s C\not \strictif D\]
but
\[
A \not\strictif D, B \not\strictif D, A \land B \strictif C \nvDash_s C \not\strictif D.
\]
In fact the set of s-formulas $\{A \not\strictif D, B \not\strictif D, A
\land B \strictif C, C \strictif D\}$ is satisfied e.g.\ by the frame $W =
\{v_1,v_2\}$ with $v_1(A) = v_2(B) = T$, $v_1(B) = v_2(A) = v_1(D) = v_2(D) =
v_1(C) = v_2(C) = F$.
\end{example}

\section{Tableaux for s-logic}\label{sec:tableau}

Another application of Lemma \ref{lemma:sat} regards the existence of a
tableaux system to check unsatisfiability of finite set of s-formulas. In
\cite{Mummert}, the authors introduce a tableaux system which keeps track of
valuations in the syntax. For this reason the tableaux are unusual compared
e.g.\ to the standard tableaux described in a textbook such as \cite{BenAri}
(see \S2.6, where they are called semantic tableaux). In fact to deal with
strict non-implication the system considers not only s-formulas, but also
so-called {\em world formulas}, that is, pairs $(A,v)$ where $A$ is a
propositional formula and $v$ represents a variable for a propositional
evaluation. The tableaux system of \cite{Mummert} contains e.g.\ the
following rule (where $\Gamma$ is a set of s- and world formulas, and $v$ is
new for $\Gamma$):\footnote{here and below we adopt the convention that the
premisses of a rule are above their consequence, while in \cite{Mummert} the
reverse convention is adopted}
\[
{{\Gamma, A \not\strictif B }
\over
{\Gamma, (A,v), (\neg B,v)} }
\]
The tableaux system of \cite{Mummert} has also the peculiarity of not
discharging the formulas which are used in a step (this is instead a common
feature of tableaux systems for propositional logic, see \cite[Algorithm
2.64]{BenAri}). This is motivated by the fact that positive s-formulas are in
fact universal assertions about the collection of all possible worlds, and
thus might be used again on a different world. However Lemma \ref{lemma:sat}
shows that this precaution is superfluous, because the unsatisfiability of a
set of s-formulas depends only on a single world, the one witnessing the
satisfiability of one of the negative s-formulas that imply the
unsatisfiability of the whole set.

A straightforward application of Lemma \ref{lemma:sat} leads to a more
traditional tableaux system, which has the advantage of dealing only with
propositional formulas, except for the first (root) step. This system can be
described as follows. The rules of the system are given by the standard rules
of a traditional tableaux system for propositional logic plus the {\em
$\not\strictif$-rule}, which is:
\[
{{\Gamma, A\not \strictif B}
\over
{\Gamma^+_{prop}, A, \neg B}},
\]
subsuming the rule
\[
{{\Gamma}
\over
{\Gamma^+_{prop}}}
\]
when $\Gamma^- = \emptyset$.

Notice that, starting from $\Gamma, A \not\strictif B, C \not\strictif D$,
the $\not\strictif$-rule allows to derive either $\Gamma^+_{prop}, A, \neg B$
or $\Gamma^+_{prop}, C, \neg D$.

\begin{definition}
A tableau for a set of s-formulas $\Gamma$ is a finite tree $T$ such that:
\begin{enumerate}[(a)]
 \item  the root of $T$ is labeled by $\Gamma$, while the inner nodes are
     labelled by sets of propositional formulas;
\item the label of the child of the root is obtained from the label of the
    root by an application of the $\not\strictif$-rule;
\item the label of every other node is obtained from the label of its
    parent by one of the standard propositional tableaux inference rules
    (see e.g.\ \cite[Algorithm 2.64]{BenAri}).
\end{enumerate}
A path through a tableau is closed if it contains a node for which the label
contains both $A$ and $\neg A$ for some propositional formula A. A tableau is
closed if every maximal branch is closed.
\end{definition}

Notice that, in contrast with the propositional case, a given set of
s-formulas might have both closed and non-closed tableaux. In fact to obtain
a closed tableau we must pick the \lq\lq right\rq\rq\ negative s-formula when
we apply the $\not\strictif$-rule to construct the child of the root, as is
easily seen for the set of s-formulas $\{A \not\strictif A, A \not\strictif
B\}$.

Applying Lemma \ref{lemma:sat} we immediately obtain:

\begin{corollary}
A set of s-formula $\Gamma$ is unsatisfiable if and only if there exists a
tableau for $\Gamma$ in which every branch is closed.
\end{corollary}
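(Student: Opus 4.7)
The plan is to reduce the claim to the soundness and completeness of the standard propositional tableaux calculus (see \cite{BenAri}) via Lemma \ref{lemma:sat}. The key structural observation about tableaux for s-formulas is that the only non-propositional step in the whole tree is the single application of the $\not\strictif$-rule producing the child of the root: below that child, one has a purely propositional tableau starting from $\Gamma^+_{prop} \cup \{A, \neg B\}$ for some selected $A \not\strictif B \in \Gamma^-$ (or from $\Gamma^+_{prop}$ when $\Gamma^-$ is empty, a case which by Corollary \ref{cor:unsat} cannot occur if $\Gamma$ is unsatisfiable). Once this is spelled out, both directions collapse to bookkeeping.

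For the ($\Rightarrow$) direction I would assume $\Gamma$ unsatisfiable and invoke the equivalence of clauses (1) and (3) of Lemma \ref{lemma:sat} to extract an $A \not\strictif B \in \Gamma^-$ such that $\Gamma^+_{prop} \cup \{A, \neg B\}$ is propositionally unsatisfiable. I then apply the $\not\strictif$-rule with this particular choice to construct the child of the root. Completeness of propositional tableaux produces a closed propositional tableau whose root is labelled by $\Gamma^+_{prop} \cup \{A, \neg B\}$, and grafting this tree below the child yields a closed tableau for $\Gamma$.

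For the ($\Leftarrow$) direction, suppose a closed tableau $T$ for $\Gamma$ exists. By the definition of a tableau, the root has a unique child whose label is $\Gamma^+_{prop} \cup \{A, \neg B\}$ for some $A \not\strictif B \in \Gamma^-$, and the subtree of $T$ rooted at this child is a closed propositional tableau for that set. Soundness of propositional tableaux then gives that $\Gamma^+_{prop} \cup \{A, \neg B\}$ is propositionally unsatisfiable, so clause (3) of Lemma \ref{lemma:sat} fails, whence $\Gamma$ is unsatisfiable. There is no genuine obstacle: the whole argument is a black-box appeal to the propositional case, and the only care required is matching the conclusion of the $\not\strictif$-rule with the sets appearing in clause (3) of Lemma \ref{lemma:sat}.
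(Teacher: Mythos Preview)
Your proposal is correct and follows exactly the approach the paper intends: the paper's own proof is the single sentence ``Applying Lemma \ref{lemma:sat} we immediately obtain'', and your argument is a faithful unpacking of that sentence, reducing both directions to soundness and completeness of the standard propositional tableaux calculus via the equivalence (1)$\Leftrightarrow$(3) of Lemma \ref{lemma:sat}. The only minor imprecision is that in the ($\Leftarrow$) direction you tacitly assume $\Gamma^- \neq \emptyset$; but since you already noted (via Corollary \ref{cor:unsat}) that $\Gamma^- = \emptyset$ forces $\Gamma$ to be satisfiable, and hence $\Gamma^+_{prop}$ to be propositionally satisfiable, no closed tableau can arise in that case, so the omission is harmless.
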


The previous corollary is useful in practice, because to check satisfiability
of s-formulas after the first step we use a standard tableaux system for
propositional logic.

However, the tableaux system presented here and the one proposed in
\cite{Mummert} are hybrid systems, where s-formulas and propositional
formulas coexist. Hence neither system is appropriate to study s-logic for
itself, and compare its deductive properties with the ones of propositional
logic, as we did in Example \ref{ex1}. What are the rules of s-logic, and can
we have a calculus dealing exclusively with s-formulas? As in \cite{Mummert},
we answer these questions for some fragments of s-logic which are relevant to
the practice of reverse mathematics. In our case these are the ones
introduced in Definition \ref{f1f2} (considered also in \cite{Mummert}) but
also the fragment $\mathcal{F}_3$ introduced in Definition \ref{f3}.

\section{Natural deductions for fragments of s-logic}\label{sec:nd}
Lemma \ref{lemma:logcons} is especially useful when dealing with the
fragments of Definitions \ref{f1f2} and \ref{f3}. In \cite{Mummert} sound and
complete deductive systems for $\mathcal{F}_1$ and $\mathcal{F}_2$ are
presented.

The system for $\mathcal{F}_1$ consists of the following axioms and rules:
\begin{description}
\item[(Axiom)] $X \strictif X$, where $X$ is a propositional variable;
\item[(HS)] From $A \strictif X$ and $ X \strictif Y$ deduce $A \strictif
    Y$;
\item[(N)] From $X \not \strictif Y$, $X \strictif W$ and $Z \strictif Y$
    deduce $W\not \strictif Z$.
\end{description}

The system for $\mathcal{F}_2$ consists of the following axioms and rules:
\begin{description}
\item[(Axiom)] $X\strictif X$, where $X$ is a propositional variable;
\item[(W)] From $A \strictif Y$, deduce $B \strictif Y$, where $B$ is any
    conjunction such that every conjunct of $A$ is also a conjunct of $B$;
\item[(HS)] From $X \land B \strictif Y$ and $A \strictif X$, deduce $A
    \land B \strictif Y$;
\item[(N)] From $A \not\strictif X$, $A \land Z \strictif X$, and $A
    \strictif Y$ for each conjunct $Y$ of $B$, deduce $B \not\strictif Z$.
\end{description}

We propose natural deduction calculi for ${\mathcal F_2}$ and for ${\mathcal
F_1}$, differing from the systems in \cite{Mummert} because of a simpler rule
for negative s-formulas. We also introduce a natural deduction system for
${\mathcal F_3}$. These systems are presented in a style similar to
\cite{HuthRyan} (see \S1.2.3 for a summary of natural deduction for
propositional logic).

\subsection{A Natural Deduction Calculus for ${\mathcal F_2}$}\label{F2}

The Natural Deduction Calculus for ${\mathcal F_2}$ has the following axioms
and rules, where $X,Y,Z,X_i, \dots$ are propositional variables, $A,B,C,
\dots$ are arbitrary (possibly empty) conjunctions of propositional
variables, $\alpha$ is an arbitrary ${\mathcal F_2}$ formula, and $\Gamma$
and $\Gamma'$ are sets of ${\mathcal F_2}$ s-formulas:

\[
\hbox{\textbf{(Axiom)}:}\quad X \strictif X
\]
\bigskip

\begin{prooftree}
\AxiomC{$\Gamma$}
\noLine
\UnaryInfC{$\triangledown$}
\noLine
\UnaryInfC{$A\strictif Y$}
\LeftLabel{\textbf{(conj1)}:\quad}
\UnaryInfC{$A\land B\strictif Y$}
\end{prooftree}
\bigskip

\begin{prooftree}
\AxiomC{$\Gamma$}
\noLine
\UnaryInfC{$\triangledown$}
\noLine
\UnaryInfC{$X_1 \land \ldots \land X_n \strictif Y$}
\LeftLabel{\textbf{(conj2)}:\quad}
\UnaryInfC{$X_{i_1} \land \ldots \land X_{i_k} \strictif Y$,}
\end{prooftree}
where $\{X_{i_1}, \dots, X_{i_k}\} = \{X_1, \dots, X_n\}$ as {\em sets} of
propositional variables.\bigskip

\begin{prooftree}
\AxiomC{$\Gamma$}
\noLine
\UnaryInfC{$\triangledown$}
\noLine
\UnaryInfC{$A \strictif Y$}
\AxiomC{$\Gamma'$}
\noLine
\UnaryInfC{$\triangledown$}
\noLine
\UnaryInfC{$Y\land B\strictif Z$}
\LeftLabel{\textbf{(trans)}:\quad}
\BinaryInfC{$A\land B\strictif Z$}
\end{prooftree}
\bigskip

 \begin{prooftree}
 \AxiomC{$\Gamma$ }
\noLine
\UnaryInfC{$\nabla$}
\noLine
\UnaryInfC{$A\strictif B$ }
 \AxiomC{$\Gamma$ }
\noLine
\UnaryInfC{$\nabla$}
\noLine
\UnaryInfC{$A\not\strictif B$ }
\LeftLabel{\textbf{($\bot$)}:\quad}
\BinaryInfC{$\alpha$ }
\end{prooftree}

For negative s-formulas we want a rule allowing to construct of a proof of $A
\not\strictif X$ from hypothesis $\Gamma, C \not\strictif Y$, whenever we
have a proof of $C \strictif Y$ from hypothesis $\Gamma, A \strictif X$:
\begin{prooftree}
\AxiomC{$\Gamma'$}
\noLine
\UnaryInfC{$\triangledown$}
\noLine
\UnaryInfC{$C \not\strictif Y$}
\AxiomC{$\Gamma, [A \strictif X]$}
\noLine
\UnaryInfC{$\triangledown$}
\noLine
\UnaryInfC{$C \strictif Y$}
\LeftLabel{\textbf{(neg)}:\quad}
\BinaryInfC{$A \not\strictif X$}
\end{prooftree}
\bigskip

Let $\Gamma \rhd_{\mathcal F_2} \alpha$ denote the existence of a natural
deduction proof (in the system just described) of the ${\mathcal F_2}$
s-formula $\alpha$ from hypothesis in the set of ${\mathcal F_2}$ s-formulas
$\Gamma$.

\begin{example}\label{ex:N2}
Here is a deduction showing that
\[
A \not\strictif X, A \land Z \strictif X, A \strictif Y_1, \ldots, A \strictif Y_n
\rhd_{\mathcal F_2} Y_1 \land \dots \land Y_n \not\strictif Z,
\]
corresponding to rule (N) in the ${\mathcal F_2}$ system of \cite{Mummert}:
\begin{center}
\begin{prooftree}
\AxiomC{$A \not \strictif X$} \AxiomC{$A \strictif Y_n$ } \AxiomC{$A \strictif Y_2$ } \AxiomC{$A \strictif Y_1$ }
\AxiomC{$[Y_1\land \ldots\land Y_n\strictif Z] $}
\BinaryInfC{$A \land Y_2 \land \dots \land Y_n \strictif Z$}
\doubleLine
\BinaryInfC{$A \land Y_3 \land \dots \land Y_n \strictif Z$}
\noLine
\UnaryInfC{\vdots}
\noLine
\UnaryInfC{$A \land Y_n \strictif Z$}
\doubleLine
\BinaryInfC{$A \strictif Z$}
\AxiomC{$A \land Z \strictif X$}
\doubleLine
\BinaryInfC{$A \strictif X$}
\BinaryInfC{$Y_1 \land \ldots \land Y_n \not\strictif Z$}
\end{prooftree}
\end{center}
Here double lines indicate combined applications of (conj2) and (trans), the
top step consists of an application of (trans), and the last step is an
application of (neg).
\end{example}

One can easily prove that all ${\mathcal F_2}$ rules are sound with respect
to s-logical consequence. As for completeness, we divide the proof into
cases, depending on the satisfiability of the set of premisses $\Gamma$.

\begin{lemma}\label{lem:F2positive}
If $\Gamma$ is a satisfiable set of ${\mathcal F_2}$ s-formulas and $\alpha$
is a ${\mathcal F_2}$ s-formula such that $\Gamma \models_s \alpha$ then
$\Gamma \rhd_{\mathcal F_2} \alpha$.
\end{lemma}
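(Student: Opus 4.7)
The plan is to split into cases according to whether $\alpha$ is a positive or a negative $\mathcal{F}_2$ s-formula, using Lemma \ref{lemma:logcons} to reduce each case to a purely propositional question about Horn-like implications.

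For a positive conclusion $\alpha = A \strictif Y$ with $A = X_1 \land \dots \land X_n$ and $Y$ a propositional variable, Lemma \ref{lemma:logcons}(i) converts the hypothesis into $\Gamma^+_{prop} \models A \to Y$. Every element of $\Gamma^+_{prop}$ is a definite Horn clause (a conjunction of variables implying a single variable), so the standard minimal-model argument shows that $Y$ must belong to the forward-chaining closure $\mathrm{Cl}$ of $\{X_1, \dots, X_n\}$ under the clauses in $\Gamma^+$. I would then build the derivation $\Gamma \rhd_{\mathcal F_2} A \strictif Y$ by induction on the stage at which $Y$ enters $\mathrm{Cl}$. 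When $Y$ is already a conjunct of $A$, the axiom $Y \strictif Y$ together with (conj1) to insert the remaining conjuncts and (conj2) to rearrange them is enough. Otherwise $Y$ enters $\mathrm{Cl}$ thanks to some clause $Z_1 \land \dots \land Z_m \strictif Y$ in $\Gamma^+$ whose conjuncts $Z_j$ appear at earlier stages, and the induction hypothesis supplies derivations of $A \strictif Z_j$ for each $j$; iterated applications of (trans), interspersed with (conj2) to bring the next $Z_j$ to the front of the left-hand side and to collapse the duplicated copies of $A$ that accumulate, give $A \strictif Y$.

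For the negative conclusion $\alpha = A \not\strictif Y$, Lemma \ref{lemma:logcons}(ii) furnishes an s-formula $E \not\strictif F \in \Gamma^-$ such that $\Gamma^+ \cup \{A \strictif Y\} \models_s E \strictif F$. This enlarged set consists only of positive s-formulas and so is automatically satisfiable by Corollary \ref{cor:unsat}; the positive case, applied to it, yields a derivation of $E \strictif F$ from $\Gamma^+ \cup \{A \strictif Y\}$. Feeding this derivation into the (neg) rule, together with the premise $E \not\strictif F \in \Gamma$ and discharging the assumption $A \strictif Y$, produces the required $\Gamma \rhd_{\mathcal F_2} A \not\strictif Y$.

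The principal obstacle is the combinatorial bookkeeping inside the inductive step of the positive case: one must check that (trans) together with (conj2) really do suffice to weld derivations of $A \strictif Z_1, \dots, A \strictif Z_m$ and $Z_1 \land \dots \land Z_m \strictif Y$ into a derivation whose left-hand side is exactly $A$, and not merely some multiset-equivalent variant of it. The pattern is essentially the one already displayed in Example \ref{ex:N2}, and it is probably cleanest to isolate it once and for all as a small auxiliary cut-style lemma (from $A \strictif Z_j$ for every conjunct $Z_j$ of $C$ and $C \strictif Y$, deduce $A \strictif Y$) before running the main induction.
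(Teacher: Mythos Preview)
Your argument is correct, but it takes a genuinely different route from the paper's own proof. The paper does not argue directly at all: it invokes Theorem~17 of \cite{Mummert}, which already establishes that whenever $\Gamma$ is satisfiable and $\Gamma \models_s \alpha$, the formula $\alpha$ can be derived from $\Gamma$ using the Mummert--Saadaoui--Sovine rules (Axiom), (W), (HS), (N). The proof then reduces to checking that each of these rules is derivable in the new natural deduction calculus, the only nontrivial case being (N), handled by Example~\ref{ex:N2}.

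Your approach, by contrast, is self-contained: you use Lemma~\ref{lemma:logcons} to translate the semantic hypothesis into a propositional Horn consequence, exploit the minimal-model / forward-chaining description of Horn entailment, and build the derivation by induction on the stage at which the head variable appears in the closure. The auxiliary cut-style lemma you isolate is exactly the derived rule $(r_2)$ that the paper later proves (for the $\mathcal{F}_3$ system, but the derivation lives entirely in the $\mathcal{F}_2$ calculus), so your bookkeeping concern is already addressed there. The trade-off is clear: the paper's proof is shorter but imports the completeness result of \cite{Mummert} as a black box, whereas your argument is longer but does not depend on that external reference and makes the connection with Horn-clause reasoning explicit---which is in the spirit of Section~\ref{sec:prolog}.
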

\begin{proof}
To prove the Lemma we rely on Theorem 17 from \cite{Mummert}, which says that
if $\Gamma$ is a satisfiable\footnote{actually, the hypothesis in
\cite{Mummert} is that $\Gamma$ is consistent, but an inspection of the proof
reveals that the right hypothesis is the one of satisfiability.} and
$\Gamma\models_s \alpha$ then $\alpha$ is derivable from $\Gamma$ using the
rules (Axiom), (W), (HS), and (N). Hence, to show that $\alpha$ is derivable
in our system it is enough to show the existence of natural deduction proofs
for rules (W), (HS), and (N). The only nontrivial case is rule (N), which is
dealt with in Example \ref{ex:N2}.
\end{proof}

To finish the completeness proof for $\rhd_{\mathcal F_2}$, we have to
consider the case when $\Gamma$ is unsatisfiable, where we need to prove that
$\Gamma \rhd_{\mathcal F_2} \alpha$, for any ${\mathcal F}_2$ s-formula
$\alpha$.

\begin{lemma}\label{lem:unsat}
If $\Gamma$ is unsatisfiable, then for any ${\mathcal F_2}$ s-formula $\alpha$
we have $\Gamma \rhd_{\mathcal F_2} \alpha$.
\end{lemma}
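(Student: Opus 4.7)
The plan is to leverage Corollary \ref{cor:unsat}, Lemma \ref{lem:F2positive}, and the $(\bot)$ rule. By Corollary \ref{cor:unsat}, unsatisfiability of $\Gamma$ gives us a specific negative s-formula $A \not\strictif Y \in \Gamma^-$ (here $Y$ is a single propositional variable because we are in ${\mathcal F_2}$) such that $\Gamma^+ \models_s A \strictif Y$. This is the starting point.

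Next, I would observe that $\Gamma^+$ is itself satisfiable: this also follows from Corollary \ref{cor:unsat}, since every set of positive s-formulas is satisfiable. Therefore the hypothesis of Lemma \ref{lem:F2positive} is met with premiss set $\Gamma^+$ and conclusion $A \strictif Y$ (which is still an ${\mathcal F_2}$ s-formula), giving a natural deduction proof $\Gamma^+ \rhd_{\mathcal F_2} A \strictif Y$. Enlarging the set of open assumptions, this yields $\Gamma \rhd_{\mathcal F_2} A \strictif Y$.

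To finish, I would note that $A \not\strictif Y$ is an element of $\Gamma$, hence trivially $\Gamma \rhd_{\mathcal F_2} A \not\strictif Y$. Applying the $(\bot)$ rule to these two derivations produces a proof of any chosen ${\mathcal F_2}$ s-formula $\alpha$ from $\Gamma$, as required.

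I do not anticipate a serious obstacle: the whole argument is a short chain of bookkeeping steps, and the only subtle point is making sure the formula $A \strictif Y$ produced by Corollary \ref{cor:unsat} actually lies in the fragment ${\mathcal F_2}$, which is immediate because it is obtained from a member of $\Gamma^-$ simply by flipping $\not\strictif$ to $\strictif$.
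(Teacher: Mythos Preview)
Your proposal is correct and follows essentially the same approach as the paper's own proof: invoke Corollary~\ref{cor:unsat} to extract $A \not\strictif Y \in \Gamma^-$ with $\Gamma^+ \models_s A \strictif Y$, use the satisfiability of $\Gamma^+$ and Lemma~\ref{lem:F2positive} to get $\Gamma \rhd_{\mathcal F_2} A \strictif Y$, and then apply the $(\bot)$ rule. Your additional remark that the extracted formula remains in $\mathcal{F}_2$ is a welcome clarification that the paper leaves implicit.
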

\begin{proof}
By Corollary \ref{cor:unsat}, if $\Gamma$ is unsatisfiable then there exists
$A \not \strictif B \in \Gamma^-$ such that $\Gamma^+ \models_s A \strictif
B$. Since $\Gamma^+$ is satisfiable (again by Corollary \ref{cor:unsat}), by
Lemma \ref{lem:F2positive} we have $\Gamma^+ \rhd_{\mathcal F_2} A \strictif
B$. Hence $\Gamma \rhd_{\mathcal F_2} A \strictif B$, and $\Gamma
\rhd_{\mathcal F_2} \alpha$ follows by rule ($\bot$).
\end{proof}

Putting all the results of this subsection together, we obtain:

\begin{theorem}
If $\Gamma$ is a set of ${\mathcal F}_2$ s-formulas and $\alpha$ is a
${\mathcal F}_2$ s-formula, then
\[
\Gamma \models_s \alpha \qquad \Leftrightarrow \qquad \Gamma \rhd_{\mathcal F_2} \alpha.
\]
\end{theorem}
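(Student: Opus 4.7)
The plan is to prove the two directions of the biconditional separately, with soundness handled by a routine induction on derivations and completeness reduced to a case split on whether $\Gamma$ is satisfiable.

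For soundness ($\Leftarrow$), I would verify by induction on the structure of the natural deduction derivation that each rule of the $\mathcal{F}_2$ system preserves $\models_s$. The axiom $X \strictif X$ is valid in every frame. The rules (conj1), (conj2), and (trans) mirror the obvious semantic facts about positive s-formulas: enlarging an antecedent by conjuncts, replacing an antecedent by the same set of conjuncts written differently, and chaining through an intermediate propositional variable all preserve the universal quantification over valuations in a frame. The rule ($\bot$) is sound because if $\Gamma \models_s A \strictif B$ and $\Gamma \models_s A \not\strictif B$ simultaneously, then $\Gamma$ is unsatisfiable and $\Gamma \models_s \alpha$ holds vacuously for any $\alpha$.

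The only rule requiring slightly more care is (neg). Here one argues: suppose $\Gamma, A \strictif X \rhd_{\mathcal F_2} C \strictif Y$ and $\Gamma' \rhd_{\mathcal F_2} C \not\strictif Y$, and by the inductive hypothesis both premisses are sound. Let $W$ be any frame satisfying $\Gamma \cup \Gamma'$; if $W$ also satisfied $A \strictif X$, then by the first premiss $W$ would satisfy $C \strictif Y$, contradicting the second premiss which forces $W$ to satisfy $C \not\strictif Y$. Hence $W$ satisfies $A \not\strictif X$, establishing $\Gamma \cup \Gamma' \models_s A \not\strictif X$.

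For completeness ($\Rightarrow$), the work is already done. If $\Gamma$ is satisfiable and $\Gamma \models_s \alpha$, Lemma \ref{lem:F2positive} gives $\Gamma \rhd_{\mathcal F_2} \alpha$ directly. If $\Gamma$ is unsatisfiable, then $\Gamma \models_s \alpha$ trivially, and Lemma \ref{lem:unsat} supplies the derivation. No single step is a real obstacle; what makes the theorem go through cleanly is precisely the combination of the case split on satisfiability with the explosion rule ($\bot$), which lets the deduction calculus simulate the semantic fact that unsatisfiable premiss sets entail every s-formula.
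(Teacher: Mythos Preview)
Your proposal is correct and follows essentially the same approach as the paper: soundness is dispatched as a routine check of each rule (the paper simply remarks that this is easily proved, while you spell out the cases, including the contrapositive argument for (neg)), and completeness is obtained exactly as in the paper by the satisfiability case split, invoking Lemma~\ref{lem:F2positive} when $\Gamma$ is satisfiable and Lemma~\ref{lem:unsat} with rule~($\bot$) when it is not.
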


\subsection{A Natural Deduction Calculus for ${\mathcal F_1}$}

The Natural Deduction Calculus for ${\mathcal F_1}$ has the following axioms
and rules (where $X,Y,Z$ are propositional variables, $\alpha$ is a
${\mathcal F_1}$ s-formula, and $\Gamma$ and $\Gamma'$ are sets of ${\mathcal
F_1}$ s-formulas):

\[
\hbox{\textbf{(Axiom)}:}\quad X \strictif X
\]
\bigskip

\begin{prooftree}
\AxiomC{$\Gamma$}
\noLine
\UnaryInfC{$\triangledown$}
\noLine
\UnaryInfC{$X\strictif Y$}
\AxiomC{$\Gamma'$ }
\noLine
\UnaryInfC{$\triangledown$}
\noLine
\UnaryInfC{$Y\strictif Z$}
\LeftLabel{\textbf{(trans)}:\quad}
\BinaryInfC{$X\strictif Z$}
\end{prooftree}
\bigskip

\begin{prooftree}
\AxiomC{$\Gamma'$}
\noLine
\UnaryInfC{$\triangledown$}
\noLine
\UnaryInfC{$Y \not\strictif Z$}
                                                  \AxiomC{$\Gamma, [X \strictif Y]$}
                                                  \noLine
                                                  \UnaryInfC{$\triangledown$}
                                                  \noLine
                                                  \UnaryInfC{$Y \strictif Z$}
\LeftLabel{\textbf{(neg)}:\quad} \BinaryInfC{$X \not\strictif Y$}
\end{prooftree}
\bigskip

 \begin{prooftree}
 \AxiomC{$\Gamma$ }
\noLine
\UnaryInfC{$\nabla$}
\noLine
\UnaryInfC{$A\strictif B$ }
 \AxiomC{$\Gamma'$}
\noLine
\UnaryInfC{$\nabla$}
\noLine
\UnaryInfC{$A\not\strictif B$ }
\LeftLabel{\textbf{($\bot$)}:\quad}
\BinaryInfC{$\alpha$ }
\end{prooftree}
\bigskip

Let $\Gamma \rhd_{\mathcal F_1}  \alpha$ denotes the existence of a natural
deduction proof (in the system just described) of the ${\mathcal F_1}$
s-formula $\alpha$ from hypothesis in the set of ${\mathcal F_1}$ s-formulas
$\Gamma$.

\begin{example}\label{ex:N1}
Here is a deduction showing that
\[
X \not\strictif Y, X \strictif W, Z \strictif Y
\rhd_{\mathcal F_1} W \not\strictif Z,
\]
corresponding to rule (N) in the ${\mathcal F_1}$ system of \cite{Mummert}:
\begin{center}
\begin{prooftree}
\AxiomC{$X \strictif W$} \AxiomC{$[W \strictif Z]$ }
                \BinaryInfC{$X \strictif Z$}         \AxiomC{$Z \strictif Y $ }
                                \BinaryInfC{$X \strictif Y$}         \AxiomC{$X \not \strictif Y$ }
                                             \BinaryInfC{$W\not\strictif Z$}
\end{prooftree}
\end{center}
Here we employed (trans) twice and (neg) for the last step.
\end{example}

As for the case of the ${\mathcal F_2}$ system, the soundness of
$\rhd_{\mathcal F_1}$ is easily proved, and left to the reader. For
completeness, we may follow the same line of the completeness proof for
$\rhd_{\mathcal F_2}$, dividing the proof into cases, depending on whether
$\Gamma$ is a satisfiable set of ${\mathcal F_1}$ s-formulas or not. The case
where $\Gamma$ is satisfiable can be dealt using Theorem 20 from
\cite{Mummert}, and consists in proving the ${\mathcal F_1}$ rules of
\cite{Mummert} in our system. The only nontrivial case is rule (N), which is
dealt with in Example \ref{ex:N1}. In the case where $\Gamma$ is
unsatisfiable, we may proceed using rule $\bot$ as we did for $\rhd_{\mathcal
F_2}$. Hence:

\begin{theorem}
If $\Gamma$ is a set of ${\mathcal F}_1$ s-formulas and $\alpha$ is a
${\mathcal F}_1$ s-formula, then
\[
\Gamma\models_s \alpha \qquad \Leftrightarrow \qquad \Gamma \rhd_{\mathcal F_1} \alpha.
\]
\end{theorem}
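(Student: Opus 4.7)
The plan is to mirror the two-case completeness strategy used for $\rhd_{\mathcal F_2}$: first dispose of soundness by an easy induction on derivations, and then split the completeness direction according to whether $\Gamma$ is satisfiable. Soundness I would handle rule by rule, observing that (Axiom) holds because every valuation makes $X \strictif X$ true, that (trans) is immediate from the meaning of $\strictif$ in a frame, that ($\bot$) is trivial because a set entailing both $A \strictif B$ and $A \not\strictif B$ is unsatisfiable and hence entails everything, and that (neg) is correct because whenever a frame $W$ satisfies $\Gamma$ together with $X \not\strictif Y$ but not $X \not\strictif Y$ itself, treating $X \strictif Y$ as an added hypothesis forces $Y \strictif Z$, contradicting the premiss $Y \not\strictif Z$.

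For completeness in the satisfiable case, the plan is to invoke Theorem 20 of \cite{Mummert}, which provides a sound and complete deductive system for $\mathcal F_1$ based on (Axiom), (HS), and (N). Since (Axiom) appears verbatim in our calculus and (HS) is exactly (trans) restricted to propositional variables, it suffices to derive each instance of (N) in natural deduction, and this is precisely the content of Example \ref{ex:N1}: two applications of (trans) followed by one application of (neg).

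For completeness in the unsatisfiable case I would proceed as in Lemma \ref{lem:unsat}. Corollary \ref{cor:unsat} supplies an $A \not\strictif B \in \Gamma^-$ with $\Gamma^+ \models_s A \strictif B$. The set $\Gamma^+$ is itself a satisfiable set of $\mathcal F_1$ s-formulas (again by Corollary \ref{cor:unsat}), and $A \strictif B$ is an $\mathcal F_1$ s-formula, so the already-established satisfiable case gives $\Gamma^+ \rhd_{\mathcal F_1} A \strictif B$, hence $\Gamma \rhd_{\mathcal F_1} A \strictif B$. Combining this with $A \not\strictif B \in \Gamma$ via rule ($\bot$) yields $\Gamma \rhd_{\mathcal F_1} \alpha$ for every $\mathcal F_1$ s-formula $\alpha$.

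The main obstacle, minor as it is, is verifying that Example \ref{ex:N1} actually lives inside the $\mathcal F_1$ calculus: the intermediate formulas produced by (trans) must again have single propositional variables on both sides, which they do, since composing $X \strictif W$ with $[W \strictif Z]$ gives $X \strictif Z$ and composing with $Z \strictif Y$ gives $X \strictif Y$. Beyond that checkpoint the argument is a straightforward transcription of the $\mathcal F_2$ case, and I expect no further surprises.
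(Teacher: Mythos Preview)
Your proposal is correct and follows exactly the paper's own strategy: soundness is routine, completeness in the satisfiable case is obtained by reducing to Theorem~20 of \cite{Mummert} via the derivation in Example~\ref{ex:N1}, and the unsatisfiable case is handled by repeating the argument of Lemma~\ref{lem:unsat} with rule~($\bot$). One slip to fix: in your soundness sketch for (neg) the phrase ``satisfies $\Gamma$ together with $X \not\strictif Y$ but not $X \not\strictif Y$ itself'' should read ``satisfies $\Gamma$ together with $Y \not\strictif Z$ but not $X \not\strictif Y$''; the rest of that sentence (adding $X \strictif Y$ forces $Y \strictif Z$, contradicting $Y \not\strictif Z$) is the correct argument.
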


\subsection{A Natural Deduction Calculus for ${\mathcal F_3}$}

We now consider the fragment $\mathcal F_3$ introduced in Definition
\ref{f3}. In considering an $\mathcal F_3$ s-formula $C\strictif D$ or $C\not
\strictif D$ we denote by $C_i$ a propositional variable which is a
$C$-conjunct and by $D_j$ a propositional variable which is a $D$-disjunct.

In order to capture derivability in fragment $\mathcal F_3$, we extend our
natural deduction calculus for $\mathcal F_2$ with the following two rules:

\begin{prooftree}
\AxiomC{$\Gamma$}
\noLine
\UnaryInfC{$\triangledown$}
\noLine
\UnaryInfC{$A\strictif B$}
\LeftLabel{\textbf{(disj1)}:\quad}
\UnaryInfC{$A\strictif D$}
\end{prooftree}
where $\{B_{1}, \dots, B_{n}\} \subseteq  \{D_1, \dots, D_h\}$ as {\em sets} of
propositional variables.
\bigskip

\begin{prooftree}
 \AxiomC{$\Gamma$ }
\noLine
\UnaryInfC{$\nabla$}
\noLine
\UnaryInfC{$A\strictif B$ }
 \AxiomC{$\Gamma, [A\strictif B_1]$ }
\noLine
\UnaryInfC{$\nabla$}
\noLine
\UnaryInfC{$C\strictif E$}
\AxiomC{$\ldots$}
\noLine
\UnaryInfC{$ $}
 \AxiomC{$\Gamma, [A\strictif B_n]$ }
\noLine
\UnaryInfC{$\nabla$}
\noLine
\UnaryInfC{$C\strictif E$}
\LeftLabel{\textbf{(disj2)}:\quad }
\QuaternaryInfC{$C\strictif E$}
\end{prooftree}
where $B= B_1 \lor \dots \lor B_n$ and $\Gamma$ is a set of positive
s-formulas.

\begin{lemma}
Rules (disj1) and (disj2) are sound in s-logic.
\end{lemma}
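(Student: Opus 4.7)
The plan is to verify each rule separately by unwinding the definitions and then, for (disj2), reducing to a fact about propositional logic via Lemma \ref{lemma:logcons}.

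For (disj1), I would argue directly from the semantics. Fix a frame $W$ satisfying $\Gamma$ and a valuation $v\in W$ with $v(A)=T$; from $W\models A\strictif B$ it follows that $v(B_{1}\lor\dots\lor B_{n})=T$, so $v(B_i)=T$ for some $i$. Since $\{B_1,\dots,B_n\}\subseteq\{D_1,\dots,D_h\}$ as sets of propositional variables, this $B_i$ equals some $D_j$, whence $v(D)=T$. Thus $W\models A\strictif D$, and (disj1) is sound.

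For (disj2) the proof is less immediate because the frame need not satisfy any particular $A\strictif B_i$: different valuations in $W$ may witness $B$ through different disjuncts. The key move is to exploit that $\Gamma$ is assumed to consist only of positive s-formulas, hence is satisfiable by Corollary \ref{cor:unsat}, and likewise each $\Gamma\cup\{A\strictif B_i\}$ is a set of positive s-formulas and thus satisfiable. Applying Lemma \ref{lemma:logcons}(i) on both sides, the soundness statement becomes the following purely propositional claim: if $\Gamma^+_{prop}\models A\rightarrow B_1\lor\dots\lor B_n$ and $\Gamma^+_{prop}\cup\{A\rightarrow B_i\}\models C\rightarrow E$ for every $i=1,\dots,n$, then $\Gamma^+_{prop}\models C\rightarrow E$.

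To establish this propositional fact, I would take an arbitrary valuation $v$ with $v\models\Gamma^+_{prop}$ and argue by cases on $v(A)$. If $v(A)=F$, then $v\models A\rightarrow B_i$ for every $i$ vacuously; if $v(A)=T$, the hypothesis $\Gamma^+_{prop}\models A\rightarrow B_1\lor\dots\lor B_n$ gives $v(B_i)=T$ for some $i$, and again $v\models A\rightarrow B_i$ for that $i$. In either case there is an $i$ with $v\models\Gamma^+_{prop}\cup\{A\rightarrow B_i\}$, so by hypothesis $v\models C\rightarrow E$. Since $v$ was arbitrary, $\Gamma^+_{prop}\models C\rightarrow E$, which via Lemma \ref{lemma:logcons}(i) yields $\Gamma\models_s C\strictif E$. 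The main subtlety, and the one reason the positivity assumption on $\Gamma$ cannot be dropped, is precisely this per-valuation choice of $i$: one cannot select a single disjunct uniformly across the frame, and Lemma \ref{lemma:logcons} is needed to localize the reasoning to individual propositional valuations.
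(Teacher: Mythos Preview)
Your proof is correct and follows essentially the same route as the paper: for (disj1) a direct semantic check, and for (disj2) reduction to propositional logic via Corollary~\ref{cor:unsat} and Lemma~\ref{lemma:logcons}(i). The only difference is that where the paper writes ``by propositional reasoning it follows that $\Gamma^+_{prop}\models C\rightarrow E$'', you spell this out explicitly with the case split on $v(A)$; this is a harmless elaboration, not a different approach.
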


\begin{proof}
Soundness of rule (disj1) is immediate.

As for rule (disj2), suppose $\Gamma$ is a positive set of $\mathcal F_3$
s-formulas and $B= B_1 \lor \dots \lor B_n$ is such that:
\begin{itemize}
  \item $\Gamma \models_s A \strictif B$;
  \item $\Gamma, A \strictif B_i \models_s C\strictif E$ for each $i = 1,
      \dots, n$.
\end{itemize}
We want to prove that $\Gamma \models_s C \strictif E$. Since $\Gamma$
contains only positive s-formulas, by Corollary \ref{cor:unsat} each set
$\Gamma, A \strictif B_i$ is satisfiable. Hence we may apply Lemma
\ref{lemma:logcons} obtaining:
\[
\Gamma^+_{prop}, A \rightarrow B_i \models C \rightarrow E.
\]
Similarly we obtain $\Gamma^+_{prop} \models A \rightarrow B$, that is
$\Gamma^+_{prop}, A \models B$. By propositional reasoning it follows that
$\Gamma^+_{prop} \models C \rightarrow E$. Hence, by Lemma
\ref{lemma:logcons} again, $\Gamma \models_s C \strictif E$.
\end{proof}

Notice that the restriction to positive set of s-formulas $\Gamma$ in rule
(disj2) is necessary because without this hypothesis the rule is no longer
sound. To see this consider e.g.\ the set
\[
\Gamma=\{A \strictif B_1 \lor B_2, A \not\strictif B_1, A \not\strictif B_2 \}.
\]
$\Gamma$ is satisfiable, while each set $\Gamma \cup \{ A \strictif B_i\}$,
for $i=1,2$, is unsatisfiable. It follows that any formula $C \strictif D$
(with $C, D$ new for $\Gamma$) is a s-consequence of both sets $\Gamma \cup
\{ A \strictif B_i\}$. Moreover, $\Gamma \models_s A \strictif B_1 \lor B_2$,
but $C \strictif D$ is not a s-consequence of $\Gamma$.
\medskip

We denote $\mathcal F_3$-derivability by $\rhd_{\mathcal F_3}$. In proving
the completeness of the $\mathcal F_3$ system we shall use also the following
three rules, that will be shown to be derivable in our system in the next
Lemma.

\begin{prooftree}
 \AxiomC{$\Gamma$ }
\noLine
\UnaryInfC{$\nabla$}
\noLine
\UnaryInfC{$B\strictif A$ }
 \AxiomC{$\Gamma $ }
\noLine
\UnaryInfC{$\nabla$}
\noLine
\UnaryInfC{$C\strictif B_1$}
\AxiomC{$\ldots$}
\noLine
\UnaryInfC{$ $}
 \AxiomC{$\Gamma $ }
\noLine
\UnaryInfC{$\nabla$}
\noLine
\UnaryInfC{$C\strictif B_n$}
\LeftLabel{\textbf{($\mathbf{r_2}$)}:\quad }
\QuaternaryInfC{$C\strictif A$}
\end{prooftree}
where $B = B_1 \land \dots \land B_n$.\bigskip

\begin{prooftree}
 \AxiomC{$\Gamma$ }
\noLine
\UnaryInfC{$\nabla$}
\noLine
\UnaryInfC{$D\strictif E$ }
 \AxiomC{$\Gamma $ }
\noLine
\UnaryInfC{$\nabla$}
\noLine
\UnaryInfC{$D\land E_1\strictif F$}
\AxiomC{$\ldots$}
\noLine
\UnaryInfC{$ $}
 \AxiomC{$\Gamma $}
\noLine
\UnaryInfC{$\nabla$}
\noLine
\UnaryInfC{$D \land E_n\strictif F$}
\LeftLabel{\textbf{($\mathbf{r_3}$)}:\quad }
\QuaternaryInfC{$D\strictif F$}
\end{prooftree}
where $E = E_1 \lor \dots \lor E_n$.\bigskip

\begin{prooftree}
 \AxiomC{$\Gamma\qquad \qquad \qquad \Gamma$ }
\noLine
\UnaryInfC{$\nabla \qquad \ldots  \qquad \nabla$}
\noLine
\UnaryInfC{$A^1\strictif B^1 \qquad \qquad A^n\strictif B^n $ }
\AxiomC{$\ldots$}
\noLine
\UnaryInfC{$ $}
\AxiomC{$\Gamma, [A^1\strictif B^1_{h_1},\ldots,  A^n\strictif B^n_{h_n} ]$ }
\noLine
\UnaryInfC{$\nabla$}
\noLine
\UnaryInfC{$C\strictif E$}
\AxiomC{$\ldots$}
\noLine
\UnaryInfC{$ $}
\LeftLabel{\textbf{(disj2gen)}:\quad }
\QuaternaryInfC{$C\strictif E$}
\end{prooftree}
In (disj2gen), we require $\Gamma$ to be a set of positive s-formulas, and we
have a premise
\begin{prooftree}
\AxiomC{$\Gamma, A^1\strictif B^1_{h_1},\ldots,  A^n\strictif B^n_{h_n}$ }
\noLine
\UnaryInfC{$\nabla$}
\noLine
\UnaryInfC{$C\strictif E$}
\end{prooftree}
for every choice of indices $h_1, \dots, h_n$ such that $B^i_{h_i}$ is a
disjunct of $B^i$.

\begin{lemma}
($r_2$) is a derived rule in the $\mathcal F_2$ system, while ($r_3$) and
(disj2gen) are derived rules in the $\mathcal F_3$ system.
\end{lemma}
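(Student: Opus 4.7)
The proof goes one rule at a time, in each case reducing to primitive rules of the relevant system: (trans) together with (conj2) for $(r_2)$, and the same plus (disj2) for $(r_3)$ and (disj2gen). Throughout, (conj2) plays the role of bookkeeping, bringing the desired conjunct into leading position so that (trans) applies.

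For $(r_2)$, I would start from the premise $B_1\land B_2\land\dots\land B_n\strictif A$ and read it as $Y\land B'\strictif Z$ with $Y=B_1$. An application of (trans) with $C\strictif B_1$ yields $C\land B_2\land\dots\land B_n\strictif A$. Using (conj2) to reorder so that $B_2$ is leading, (trans) with $C\strictif B_2$ then gives $C\land C\land B_3\land\dots\land B_n\strictif A$; iterating this pattern, after $n$ rounds I arrive at $C\land\dots\land C\strictif A$ with $n$ copies of $C$, which a final application of (conj2) collapses to $C\strictif A$.

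For $(r_3)$, the plan is a single invocation of (disj2) with $A:=D$ and $B:=E=E_1\lor\dots\lor E_n$, aiming at the conclusion $D\strictif F$. The first premise of (disj2) is the given $\Gamma\rhd_{\mathcal F_3} D\strictif E$. The $i$-th subderivation required by (disj2) is $\Gamma,\,D\strictif E_i\rhd_{\mathcal F_3} D\strictif F$: for this I take $D\land E_i\strictif F$ from $\Gamma$, reorder it to $E_i\land D\strictif F$ via (conj2), apply (trans) with the assumption $D\strictif E_i$ to obtain $D\land D\strictif F$, and use (conj2) once more to collapse to $D\strictif F$. The hypothesis that $\Gamma$ be positive, required by (disj2), is inherited and should be read as implicit in the statement of $(r_3)$.

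For (disj2gen), the idea is to nest (disj2) $n$ times, once for each of $A^1\strictif B^1,\dots,A^n\strictif B^n$. The outermost application uses $\Gamma\rhd_{\mathcal F_3} A^1\strictif B^1$ and case-splits on the disjuncts of $B^1$: for each $h_1$ it opens a subderivation whose context is $\Gamma$ augmented with the positive s-formula $A^1\strictif B^1_{h_1}$, in which I must still derive $C\strictif E$. Inside it I weaken the hypothesis $\Gamma\rhd_{\mathcal F_3} A^2\strictif B^2$ to the enlarged context and apply (disj2) again on the disjuncts of $B^2$, introducing $A^2\strictif B^2_{h_2}$; and so on for $j=3,\dots,n$. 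After all $n$ nestings, each leaf of the resulting case tree is indexed by a tuple $(h_1,\dots,h_n)$ and demands exactly the corresponding premise of (disj2gen). The main bookkeeping point, and the only real obstacle, is to verify that positivity of the context is preserved at every nesting level so that the side condition of each (disj2) application remains satisfied; this is immediate because all s-formulas added to $\Gamma$ during the nesting are positive ones of the form $A^j\strictif B^j_{h_j}$.
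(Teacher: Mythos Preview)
Your proof is correct and follows essentially the same approach as the paper: iterated (trans)+(conj2) for $(r_2)$, a single application of (disj2) with (trans)+(conj2) subderivations for $(r_3)$, and nested applications of (disj2) for (disj2gen), with positivity of the context checked at each level. Your observation that $(r_3)$ implicitly requires $\Gamma$ to be positive (so that (disj2) is applicable) is a point the paper leaves tacit.
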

\begin{proof}
First, we provide a proof for  $(r_2)$ in the $\mathcal F_2$ system.
\begin{prooftree}
 \AxiomC{$\Gamma$ }
\noLine
\UnaryInfC{$\nabla$}
\noLine
\UnaryInfC{$B_1\land\ldots\land B_n\strictif A$ }
 \AxiomC{$\Gamma $ }
\noLine
\UnaryInfC{$\nabla$}
\noLine
\UnaryInfC{$C\strictif B_1$}
\BinaryInfC{$C\land B_2\land \ldots \land B_n\strictif A$}
\AxiomC{$\Gamma $ }
\noLine
\UnaryInfC{$\nabla$}
\noLine
\UnaryInfC{$C\strictif B_2$}
\doubleLine
\BinaryInfC{$C\land B_3\land \ldots \land B_n\strictif A$}
\noLine
\UnaryInfC{$\vdots$}
\UnaryInfC{$C\land  B_n\strictif A$}
 \AxiomC{$\Gamma $ }
\noLine
\UnaryInfC{$\nabla$}
\noLine
\UnaryInfC{$C\strictif B_n$}
\doubleLine
\BinaryInfC{$C\strictif A$}
\end{prooftree}
where in the first step we apply (trans) and then, in correspondence of each
double line, we use a combination of applications of (trans) and (conj2).
\bigskip

We now show how to derive $(r_3)$ in the $\mathcal F_3$ system.
\begin{prooftree}
 \AxiomC{$\Gamma$ }
\noLine
\UnaryInfC{$\nabla$}
\noLine
\UnaryInfC{$D\strictif E$ }
 \AxiomC{$[D\strictif E_1]$ }
 \AxiomC{$\Gamma $ }
\noLine
\UnaryInfC{$\nabla$}
\noLine
\UnaryInfC{$D\land E_1\strictif F$}
\doubleLine
\BinaryInfC{$D\strictif F$ }
\AxiomC{$\ldots$}
\noLine
\UnaryInfC{$ $}
 \AxiomC{$[D\strictif E_n]$ }
 \AxiomC{$\Gamma $ }
\noLine
\UnaryInfC{$\nabla$}
\noLine
\UnaryInfC{$D\land E_n\strictif F$}
\doubleLine
\BinaryInfC{$D\strictif F$ }
\QuaternaryInfC{$D\strictif F$}
\end{prooftree}
Again, double lines indicate a combination of applications of (trans) and
(conj2), while in the final step we use (disj2).

As for rule (disj2gen), suppose $B^1= B^1_1 \lor \dots \lor B^1_h$. We can apply
rule (disj2) to
\[
\Gamma \rhd_{\mathcal F_3} A^1 \strictif B^1
\]
and all premisses of the form
\[
\Gamma, A^1\strictif B^1_j, A^2 \strictif B^1_{h_2}, \dots, A^n\strictif B^1_{h_n} \rhd_{\mathcal F_3} C\strictif E,
\]
for $j=1, \dots, h$, obtaining, for all choices of indices $h_2, \dots, h_n$
such that $B^i_{h_i}$ is a disjunct of $B^i$, that
\[
\Gamma, A^2 \strictif B^2_{h_2},\ldots, A^n \strictif B^n_{h_n} \rhd_{\mathcal F_3} C \strictif E;
\]
In other words, we succeeded in eliminating $A^1\strictif B^1$ from the
premisses. In the same way, by applying (disj2) we can successively eliminate
$A^2 \strictif B^2, \dots, A^n \strictif B^n$, eventually deriving $\Gamma
\rhd_{\mathcal F_3} C \strictif E$, as desired.
\end{proof}

In order to prove the completeness of $\mathcal F_3$-derivability we need a
preliminary Lemma.

\begin{lemma}\label{lem:disjprop}
Suppose $\Gamma$ is a set of positive $\mathcal F_3$ s-formulas such that
$\Gamma \ntriangleright_{\mathcal F_3} C \strictif E$. Then there exists a
set of positive $\mathcal F_3$ s-formulas $\Delta$, closed under
$\rhd_{\mathcal F_3}$, such that:
\begin{itemize}
  \item $\Delta \supseteq \Gamma$;
  \item $\Delta \ntriangleright_{\mathcal F_3} C \strictif E,$;
  \item for all positive $\mathcal F_3$ s-formulas $A \strictif B \in
      \Delta$ there exists $i$ such that $A \strictif B_i \in \Delta$.
\end{itemize}
\end{lemma}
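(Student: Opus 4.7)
The plan is to apply Zorn's lemma to extract a maximal set with the desired nonderivability property, then read off the three bullets from maximality, using rule (disj2) as the key ingredient.

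First, I would consider the family $\mathcal{P}$ of all positive sets $\Sigma$ of $\mathcal{F}_3$ s-formulas with $\Gamma \subseteq \Sigma$ and $\Sigma \ntriangleright_{\mathcal{F}_3} C \strictif E$, partially ordered by inclusion. I would check the two preconditions of Zorn: $\mathcal{P}$ is nonempty (it contains $\Gamma$) and closed under unions of chains. Positivity and the inclusion of $\Gamma$ pass to unions trivially, and if the union of a chain derived $C \strictif E$, the finite derivation would use only premises from some single element of the chain, contradicting its membership in $\mathcal{P}$. Zorn then yields a maximal $\Delta \in \mathcal{P}$, which is automatically positive and extends $\Gamma$ without deriving $C \strictif E$.

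Next I would verify that $\Delta$ is closed under $\rhd_{\mathcal{F}_3}$. Suppose $\Delta \rhd_{\mathcal{F}_3} \alpha$ for some positive $\mathcal{F}_3$ s-formula $\alpha$. If $\Delta \cup \{\alpha\}$ derived $C \strictif E$, I would splice the derivation of $\alpha$ from $\Delta$ into each open occurrence of $\alpha$ as a hypothesis in the derivation of $C \strictif E$ from $\Delta, \alpha$ (a standard substitution/cut property of natural deduction), concluding $\Delta \rhd_{\mathcal{F}_3} C \strictif E$, a contradiction. Hence $\Delta \cup \{\alpha\} \in \mathcal{P}$, and maximality forces $\alpha \in \Delta$.

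The main step, and the only place where the specific structure of $\mathcal{F}_3$ enters nontrivially, is the disjunctive witness property. Let $A \strictif B \in \Delta$ with $B = B_1 \lor \dots \lor B_n$, and suppose for contradiction that $A \strictif B_i \notin \Delta$ for every $i$. Each $A \strictif B_i$ is itself a positive $\mathcal{F}_3$ s-formula, so by maximality $\Delta \cup \{A \strictif B_i\} \rhd_{\mathcal{F}_3} C \strictif E$ for every $i$. Combined with $\Delta \rhd_{\mathcal{F}_3} A \strictif B$ (because $A \strictif B \in \Delta$) and the positivity of $\Delta$, rule (disj2) delivers $\Delta \rhd_{\mathcal{F}_3} C \strictif E$, contradicting $\Delta \in \mathcal{P}$.

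The main obstacle is recognising rule (disj2) as the right tool for the witness step and remembering that its side condition requires the premise context to be positive; this is precisely why I restrict $\mathcal{P}$ to positive sets from the outset, so that it is preserved under unions of chains and (disj2) is always applicable to $\Delta$. Everything else is a routine maximality argument.
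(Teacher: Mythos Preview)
Your argument is correct, and it is a genuinely different route from the paper's. The paper builds $\Delta$ by an explicit countable iteration: it enumerates the positive $\mathcal F_3$ s-formulas as $\alpha_1,\alpha_2,\dots$ and at stage $n$ simultaneously chooses disjunct witnesses for all $\alpha_j$ with $j\le n$ that are derivable from $\Gamma_n$, appealing to the derived rule \textbf{(disj2gen)} (not just \textbf{(disj2)}) to ensure some choice of witnesses keeps $C\strictif E$ underivable; then $\Delta=\bigcup_n\Gamma_n$. Your Zorn-based approach replaces this bookkeeping by maximality: you never need \textbf{(disj2gen)}, a single application of \textbf{(disj2)} suffices for the witness step, and you do not have to worry about new formulas becoming derivable after adding witnesses, since a maximal element absorbs all such effects at once. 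The trade-off is that your proof invokes the Axiom of Choice where the paper's is essentially constructive over a countable language; on the other hand, your argument is shorter, uses only the primitive rule, and goes through unchanged for languages of arbitrary cardinality. One small point worth making explicit in your write-up: from purely positive hypotheses only positive s-formulas are derivable (an easy induction on derivations, since \textbf{(neg)} and \textbf{($\bot$)} each require a negative premise), so ``closed under $\rhd_{\mathcal F_3}$'' for a positive $\Delta$ really does reduce to closure under positive consequences, as you tacitly use.
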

\begin{proof}
Without loss of generality, we may suppose that $\Gamma$ is closed under
$\rhd_{\mathcal F_3}$. Let $\{\alpha_1, \alpha_2, \dots\}$ be an enumeration
of the positive ${\mathcal F_3}$ s-formulas, with $\alpha_j = A^j \strictif
B^j$.

We claim that there exists a sequence $\Gamma_0 =\Gamma, \Gamma_1, \dots,
\Gamma_n, \ldots$ of sets of positive $\mathcal F_3$ s-formulas, each closed
under $\rhd_{\mathcal F_3}$, with the following properties:
\begin{itemize}
  \item $\Gamma_{n} \ntriangleright_{\mathcal F_3} C \strictif E$;
  \item if, for $j \leq n$, $\Gamma_{n} \rhd_{\mathcal F_3} \alpha_j$, then
      there exists $h$ such that $A^j \strictif B^j_h \in \Gamma_{n+1}$.
\end{itemize}

We start by setting $\Gamma_0 = \Gamma$. Suppose now we already defined
$\Gamma_n$ such that $\Gamma_{n} \ntriangleright_{\mathcal F_3} C \strictif
E$. Let $j_1, \dots j_h \leq n$ be the list of all indices up to $n$ such
that $\Gamma_{n} \rhd_{\mathcal F_3} \alpha_{j_i}$. Then there must exist a
choice of indices $h_{j_1}, \dots, h_{j_h}$ such that $B^{j_i}_{h_{j_i}}$ is
a disjunct of $B^{j_i}$, and
\[
\Gamma_{n}, A^{j_1} \strictif B^{j_1}_{h_{j_1}}, \dots, A^{j_n} \strictif B^{j_n}_{h_{j_h}}
\ntriangleright_{\mathcal F_3} C \strictif E.
\]
In fact, if this were not the case, using rule (disj2gen), we would obtain
that $\Gamma_{n} \rhd_{\mathcal F_3} C \strictif E$. We fix such $h_{j_1},
\dots, h_{j_h}$ and let $\Gamma_{n+1}$ be the closure of $\Gamma_{n} \cup
\{A^1 \strictif B^1_{h_{j_1}}, \dots, A^n \strictif B^n_{h_{j_h}}\}$ under
$\rhd_{\mathcal F_3}$. This proves the claim.

Finally, it is straightforward to check that $\Delta = \bigcup_n \Gamma_n$ has the
required properties.
\end{proof}

We split the proof of the completeness of $\rhd_{\mathcal F_3} $ into cases,
depending on the satisfiability of $\Gamma$ and on the type of the formula to
be derived. We start with:

\begin{lemma}\label{lem:positive}
Suppose $\Gamma$ is a satisfiable set of $\mathcal F_3$ s-formulas and $C
\strictif E$ is a positive $\mathcal F_3$ s-formula such that $\Gamma
\models_s C \strictif E$. Then $\Gamma \rhd_{\mathcal F_3} C \strictif E$.
\end{lemma}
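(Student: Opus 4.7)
The plan is to argue by contrapositive, building a countermodel from a maximal non-deriving extension. First, since $\Gamma$ is satisfiable, Lemma \ref{lemma:logcons}(i) gives $\Gamma \models_s C \strictif E \Leftrightarrow \Gamma^+ \models_s C \strictif E$, and obviously $\Gamma^+ \rhd_{\mathcal F_3} C \strictif E$ implies $\Gamma \rhd_{\mathcal F_3} C \strictif E$. So it suffices to prove: if $\Gamma$ is a set of \emph{positive} $\mathcal F_3$ s-formulas with $\Gamma \ntriangleright_{\mathcal F_3} C \strictif E$, then $\Gamma \nvDash_s C \strictif E$.

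Assume $\Gamma \ntriangleright_{\mathcal F_3} C \strictif E$. Apply Lemma \ref{lem:disjprop} to obtain a set $\Delta \supseteq \Gamma$ of positive $\mathcal F_3$ s-formulas, closed under $\rhd_{\mathcal F_3}$, such that $\Delta \ntriangleright_{\mathcal F_3} C \strictif E$ and every positive s-formula $A \strictif B \in \Delta$ admits a disjunct $B_i$ of $B$ with $A \strictif B_i \in \Delta$. Define a valuation $v$ on the propositional variables by
\[
v(X) = T \quad \Longleftrightarrow \quad C \strictif X \in \Delta.
\]
The claim is that the frame $\{v\}$ satisfies $\Gamma$ but not $C \strictif E$.

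For $v(C) = T$: each conjunct $C_i$ of $C$ satisfies $C \strictif C_i \in \Delta$, obtained from \textbf{(Axiom)} $C_i \strictif C_i$ by repeated applications of \textbf{(conj1)} to adjoin the remaining conjuncts of $C$. For $v(E) = F$: if some disjunct $E_j$ of $E$ had $C \strictif E_j \in \Delta$, then \textbf{(disj1)} would give $C \strictif E \in \Delta$, contradicting $\Delta \ntriangleright_{\mathcal F_3} C \strictif E$. Finally, to see that $v$ satisfies each $P \strictif Q \in \Gamma \subseteq \Delta$, suppose $v(P) = T$, so $C \strictif P_k \in \Delta$ for every conjunct $P_k$ of $P$. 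Applying the derived rule $(r_2)$ to $P \strictif Q$ together with these s-formulas yields $C \strictif Q \in \Delta$. By the witness property furnished by Lemma \ref{lem:disjprop}, some disjunct $Q_j$ of $Q$ satisfies $C \strictif Q_j \in \Delta$, whence $v(Q_j) = T$ and therefore $v(Q) = T$.

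The main obstacle is the last verification, where one must simultaneously use the closure of $\Delta$ under the conjunction-eliminating rule $(r_2)$ and the disjunction witness property. The role of Lemma \ref{lem:disjprop} is precisely to guarantee both features in a single set, and the definition of $v$ is calibrated so that the canonical valuation reads off membership in $\Delta$ of s-formulas with antecedent $C$. Once the three verifications are in place, the frame $\{v\}$ witnesses $\Gamma \nvDash_s C \strictif E$, closing the contrapositive.
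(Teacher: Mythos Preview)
Your proof is correct and follows essentially the same approach as the paper: apply Lemma~\ref{lem:disjprop} to obtain $\Delta$, define the canonical valuation $v(X)=T$ iff $C\strictif X\in\Delta$, and verify using $(r_2)$ and \textbf{(disj1)} that $v$ satisfies $\Delta$ but not $C\strictif E$. The only organizational difference is that you reduce to positive $\Gamma$ at the outset via Lemma~\ref{lemma:logcons}(i), so the single-valuation frame $\{v\}$ suffices, whereas the paper keeps the full $\Gamma$ and augments the frame with extra valuations $v_\alpha$ (drawn from a model of $\Gamma$) to witness each negative s-formula in $\Gamma^-$; your reduction is slightly cleaner and avoids this final step.
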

\begin{proof}
We reason by contradiction. If $\Gamma \ntriangleright_{\mathcal F_3} C
\strictif E$ then $\Gamma^+ \ntriangleright_{\mathcal F_3} C\strictif E$,
either. By applying the previous Lemma to $\Gamma^+$ we find a set of
positive $\mathcal F_3$ s-formulas $\Delta \supseteq \Gamma^+$, closed under
${\rhd_{\mathcal F_3}}$, such that
\[
\Delta \ntriangleright_{\mathcal F_3} C \strictif E,
\]
and for all $\mathcal F_3$ s-formulas $A \strictif B$, if $A \strictif B \in
\Delta$ then there exists $i$ with $A \strictif B_i \in \Delta$.

Let $w$ be the valuation defined by setting, for each propositional variable
$X$:
\[
w(X)=
\begin{cases}
T & \hbox{if $C \strictif X \in \Delta$;}\\
F & \hbox{if $C \strictif X \notin \Delta$.}
\end{cases}
\]

We claim that $w(\Delta)= T$, and $w( C\strictif E)=F$.

If $B \strictif A \in \Delta$ and $w(B)=T$, then, since $B=B_1 \land \dots
\land B_n$, we have $w(B_i)=T$ for all $i$. By definition of $w$, for all $i$
it holds $C \strictif B_i \in \Delta$, and by rule ($r_2$) we obtain $C
\strictif A \in \Delta$. By the property of $\Delta$ there exists $i$ such
that $C\strictif A_i \in \Delta$. Hence $w(A_i)=T$ and therefore $w(A)=T$ as
well. This proves that $w(B \strictif A)=T$, for all $B \strictif A \in
\Delta$.

Let us now show that $w(C \strictif E)=F$. Since $w(C)=T$, it suffices to
prove that $w(E_i)=F$, for all $i$. If $w(E_i)=T$ for some $i$, then $C
\strictif E_i \in \Delta$ and $C \strictif E \in \Delta$ would follow by rule
(disj1).\smallskip

Having established the claim, we conclude the proof as follows. For all
negative $\mathcal F_3$ s-formulas $\alpha = A \not\strictif B \in \Gamma$,
let $v_\alpha$ be a valuation such that $v_\alpha(\Gamma)=T$, $v_\alpha(A)=T$
and $v_\alpha(B)=F$. Such a $v_\alpha$ exists, because by hypothesis $\Gamma$
is satisfiable. Then the frame $W = \{w\} \cup \{v_\alpha: \alpha \in
\Gamma^-\}$ is such that $W \models \Gamma$ and $W \nvDash C \strictif E$,
contradicting our hypothesis.
\end{proof}

Next, we consider the case in which $\Gamma$ is satisfiable, but the formula
to be derived is negative.

\begin{lemma}
Suppose $\Gamma$ is a satisfiable set of $\mathcal F_3$ s-formulas and $C
\not\strictif G$ is a negative $\mathcal F_3$ s-formula such that $\Gamma
\models_s C \not\strictif G$. Then $\Gamma \rhd_{\mathcal F_3} C
\not\strictif G$.
\end{lemma}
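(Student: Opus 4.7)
The plan is to mimic the strategy used in the $\mathcal F_2$ case (Example \ref{ex:N2} and Lemma \ref{lem:F2positive}, as extended by the previous Lemma \ref{lem:positive}): reduce the derivability of the negative s-formula $C \not\strictif G$ to the derivability of a purely positive s-formula, which can then be handled by the already proved Lemma \ref{lem:positive}, and close off with one application of the (neg) rule.

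More precisely, since $\Gamma$ is satisfiable and $\Gamma \models_s C \not\strictif G$, I would first invoke Lemma \ref{lemma:logcons}$(ii)$ to obtain a negative s-formula $E \not\strictif F \in \Gamma^-$ such that
\[
\Gamma^+,\, C \strictif G \models_s E \strictif F.
\]
Next, observe that $\Gamma^+ \cup \{C \strictif G\}$ is a set of purely positive $\mathcal F_3$ s-formulas, and hence is automatically satisfiable by Corollary \ref{cor:unsat}. Applying Lemma \ref{lem:positive} (whose hypotheses are met because the target $E \strictif F$ is positive), one concludes
\[
\Gamma^+,\, C \strictif G \rhd_{\mathcal F_3} E \strictif F.
\]
Enlarging the set of premisses trivially yields a natural deduction proof, from $\Gamma$ together with the discharged hypothesis $[C \strictif G]$, of $E \strictif F$. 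Since $E \not\strictif F \in \Gamma$, the (neg) rule applied to these two ingredients discharges $C \strictif G$ and delivers $\Gamma \rhd_{\mathcal F_3} C \not\strictif G$, as required.

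There is no genuine obstacle: the heavy lifting has already been done in Lemma \ref{lem:positive}, and the present statement is essentially a mechanical translation of Lemma \ref{lemma:logcons}$(ii)$ into natural deduction via the (neg) rule, exactly parallel to the structure of the argument sketched for $\mathcal F_2$ in Example \ref{ex:N2}. The only point to be mildly careful about is that the (neg) rule in the $\mathcal F_3$ system be read so as to allow its discharged hypothesis $A \strictif X$ and its lateral premise $C \not\strictif Y$ to range over arbitrary $\mathcal F_3$ s-formulas (so that $X$ may be a disjunction and $Y$ may be a disjunction), which is the natural extension of the $\mathcal F_2$ rule to $\mathcal F_3$.
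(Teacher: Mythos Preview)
Your argument is correct and, in fact, more economical than the paper's. The paper argues by contradiction: assuming $\Gamma$ is $\rhd_{\mathcal F_3}$-closed and $C \not\strictif G \notin \Gamma$, it constructs for every $D \not\strictif E \in \Gamma^-$ a valuation $w_\alpha$ satisfying $\Gamma^+$, $D$, $\neg E$, and either $\neg C$ or $G$, so that the resulting frame witnesses $\Gamma \nvDash_s C \not\strictif G$. The construction of each $w_\alpha$ essentially re-runs the saturation argument of Lemma~\ref{lem:disjprop} on the set $\Gamma^+ \cup \{C \strictif G\}$ and then extracts the valuation by hand, using the derived rules $(r_2)$ and $(r_3)$. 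The pivotal observation, that $\Gamma^+ \cup \{C \strictif G\} \ntriangleright_{\mathcal F_3} D \strictif E$ (otherwise (neg) would already give $C \not\strictif G$), is exactly the contrapositive of your step.

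What you do differently is simply \emph{cite} Lemma~\ref{lem:positive} rather than reprove its content: once Lemma~\ref{lemma:logcons}$(ii)$ hands you $E \not\strictif F \in \Gamma^-$ with $\Gamma^+, C \strictif G \models_s E \strictif F$, and Corollary~\ref{cor:unsat} guarantees satisfiability of the (positive) set $\Gamma^+ \cup \{C \strictif G\}$, Lemma~\ref{lem:positive} delivers the derivation and (neg) finishes. This buys brevity and a cleaner dependency structure; the paper's longer route buys nothing extra, and appears to be an artefact of following the template of the corresponding proof in \cite{Mummert}. Your caveat about reading the (neg) rule with $\mathcal F_3$ formulas in place of propositional variables is well taken and is also tacitly assumed in the paper's own proof.
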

\begin{proof}
This proof follows the corresponding proof in \cite{Mummert} with minor
adjustments. We reason again by contradiction supposing (without loss of
generality) that $\Gamma$ closed under $\rhd_{\mathcal F_3}$ and $ C\not
\strictif G \not \in \Gamma$. For any $\alpha= D \not\strictif E \in
\Gamma^-$, we will find a valuation $w_\alpha$ with $w_\alpha(\Gamma^+)=T$,
$w_\alpha(D)=T$ and $w_\alpha(E)=F$, and either $w_\alpha( C)=F$ or
$w_\alpha(G)=T$. Once this is done, we may set
\[
W= \{w_\alpha : \alpha \in \Gamma^-\}
\]
and find a contradiction, since $W$ is a frame satisfying $\Gamma$ but
failing to satisfy $C \not\strictif G$.

Fix $\alpha=D\not \strictif E\in \Gamma^-$. Since $\Gamma$ is satisfiable,
there exists a valuation $w$ with $w(\Gamma^+)=T$, $w(D)=T$, and $w(E)=F$. In
order to find $w_\alpha$ we may suppose that all the valuations $w$ with
these properties satisfy also $w(C)=T$ (otherwise we may choose such a $w$
for $w_\alpha$). Consider the set of positive s-formulas $\Gamma^+ \cup \{C
\strictif G\}$. Then
\[
\Gamma^+ \cup \{C \strictif G\} \ntriangleright_{\mathcal F_3} D \strictif E,
\]
otherwise, since $\Gamma\rhd_{\mathcal F_3} D \not \strictif E$, we would
have $\Gamma \rhd_{\mathcal F_3} C \not\strictif G$ be the (neg) rule. By
Lemma \ref{lem:disjprop} there exists a set of positive formulas
$\Delta\supseteq \Gamma^+\cup\{C\strictif G\}$, closed under $\rhd_{\mathcal
F_3}$, such that $\Delta \ntriangleright_{\mathcal F_3} D \strictif E$, and
for all $A,B$, if $A \strictif B \in \Delta$ then there exists $i$ with $A
\strictif B_i \in \Delta$. We claim that $D \strictif C_i \in \Delta$ for
every $i$. To see this, we consider the valuation $w$ defined as
\[
w(X)=
\begin{cases}
T & \hbox{if $D \strictif X \in \Delta$;}\\
F & \hbox{if $D \strictif X \notin \Delta$.}
\end{cases}
\]
As in Lemma \ref{lem:positive}, it is not difficult to check that
$w(\Delta)=T$, and $w (D \strictif E)=F$. By the previous hypothesis, we have
$w(C)=T$, that is, $w(C_i)=T$ for all $i$. By definition of $w$ this implies
$D \strictif C_i \in \Delta$.

Next, consider the valuation $v_i$ defined as
\[
v_i(X)=
\begin{cases}
T & \hbox{if $D\land G_i \strictif X \in \Delta$;}\\
F & \hbox{if $D\land G_i \strictif X \notin \Delta$.}
\end{cases}
\]
We claim that there exists $i$ with $v_i(E)=F$. Otherwise, we have
$v_i(E)=T$, for all $i$. This means that for all $i$ there exists $j$ with
$v_i(E_j)=T$, that is, by definition of $v_i$, $D \land G_i \strictif E_j \in
\Delta$. It follows that, for all $i$, $D \land G_i \strictif E \in \Delta$.
Consider the following natural deduction, which uses first ($r_2$) and then
($r_3$);
\begin{prooftree}
 \AxiomC{$\Delta$ }
\noLine
\UnaryInfC{$\nabla$}
\noLine
\UnaryInfC{$C\strictif G$ }
 \AxiomC{$\Delta$ }
\noLine
\UnaryInfC{$\nabla$}
\noLine
\UnaryInfC{$D\strictif C_1$}
\AxiomC{$\ldots$}
\noLine
\UnaryInfC{$ $}
 \AxiomC{$\Delta$ }
\noLine
\UnaryInfC{$\nabla$}
\noLine
\UnaryInfC{$D\strictif C_k$}
\QuaternaryInfC{$D\strictif G$}
\AxiomC{$\Delta$ }
\noLine
\UnaryInfC{$\nabla$}
\noLine
\UnaryInfC{$D\land G_1\strictif E$}
\AxiomC{$\ldots$}
\noLine
\UnaryInfC{$ $}
\AxiomC{$\Delta$ }
\noLine
\UnaryInfC{$\nabla$}
\noLine
\UnaryInfC{$D\land G_n\strictif E$}
\QuaternaryInfC{$D\strictif E$}
\end{prooftree}
This contradicts $\Delta \ntriangleright D \strictif E$.

Thus we can pick $i$ such that $v_i(E)=F$. We have $v_i(D)=T$, $v_i(E)=F$,
and $v_i(G)=T$, since $D \land G_i \strictif G_i \in \Delta$ and $G$ is a
disjunction. Moreover, as before, $v_i(\Delta)=T$: if $A\strictif B\in
\Delta$ and $v_i(A)=T$, then $ D\land G_i \strictif A_j\in \Delta$, for all
$j$. By rule ($r_2$) we obtain $D\land G_i \strictif B\in \Delta$, and by the
properties of $\Delta$ there exists $h$ with $D\land G_i \strictif B_h\in
\Delta$; hence, $v_i(B_h)=T$, and $v_i(B)=T$. It follows that
$v_i(\Gamma^+)=T$, and we may choose such a $v_i$ as $w_\alpha$, finishing
the proof.
\end{proof}

The two previous results prove that, if $\Gamma$ is a satisfiable set of
${\mathcal F}_3$ s-formulas, then for any ${\mathcal F}_3$ s-formula $\alpha$
such that $\Gamma \models_s \alpha$ we have $\Gamma \rhd_{\mathcal F_3}
\alpha$.

To finish the completeness proof for $\rhd_{\mathcal F_3}$, we still have to
consider the case when $\Gamma$ is unsatisfiable. In this case we have to
prove that $\Gamma \rhd_{\mathcal F_3} \alpha$, for any ${\mathcal F}_3$
s-formula $\alpha$, and we  may repeat the proof of Lemma
\ref{lem:unsat}. Hence:

\begin{lemma}
If $\Gamma$ is unsatisfiable, then for any ${\mathcal F}_3$ s-formula
$\alpha$ we have $\Gamma \rhd_{\mathcal F_3} \alpha$.
\end{lemma}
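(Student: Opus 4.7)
The plan is to repeat the proof of Lemma \ref{lem:unsat}, replacing each appeal to a fact about $\mathcal{F}_2$ with its $\mathcal{F}_3$ analogue. The whole argument is essentially mechanical given the tools already built up, so I do not expect any real obstacle; the only thing to check is that all the ingredients transfer.

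First, I would invoke Corollary \ref{cor:unsat} to locate the source of unsatisfiability: since $\Gamma$ is unsatisfiable, there exists $A \not\strictif B \in \Gamma^-$ such that $\Gamma^+ \models_s A \strictif B$. Because $\Gamma$ is a set of $\mathcal{F}_3$ s-formulas, $A$ is a nonempty conjunction of propositional variables and $B$ is a nonempty disjunction, so $A \strictif B$ is itself a (positive) $\mathcal{F}_3$ s-formula, which is what is needed to apply the derivability machinery of the $\mathcal{F}_3$ system.

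Next, I would observe that $\Gamma^+$ is a set of positive s-formulas and hence satisfiable, again by the last clause of Corollary \ref{cor:unsat}. This puts me exactly in the hypothesis of Lemma \ref{lem:positive}, whose conclusion gives $\Gamma^+ \rhd_{\mathcal F_3} A \strictif B$; by monotonicity of derivability, also $\Gamma \rhd_{\mathcal F_3} A \strictif B$.

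Finally, since $A \not\strictif B \in \Gamma$ one has trivially $\Gamma \rhd_{\mathcal F_3} A \not\strictif B$. Applying rule $(\bot)$ (which is part of the $\mathcal{F}_2$ system and hence of its $\mathcal{F}_3$ extension) to these two derivations yields $\Gamma \rhd_{\mathcal F_3} \alpha$ for an arbitrary $\mathcal{F}_3$ s-formula $\alpha$, concluding the proof. The step that does the real work is Lemma \ref{lem:positive}, which has already been established; everything here is just plugging it in where Lemma \ref{lem:F2positive} was used for the $\mathcal{F}_2$ case.
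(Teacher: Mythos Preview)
Your proposal is correct and follows exactly the approach the paper indicates: repeat the proof of Lemma~\ref{lem:unsat}, replacing the appeal to Lemma~\ref{lem:F2positive} by Lemma~\ref{lem:positive}. The paper does not spell out the argument further than this, and your write-up simply makes the routine substitutions explicit.
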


Putting all results of this section together, we obtain:
\begin{theorem}
If $\Gamma$ is a set of  ${\mathcal F}_3$ s-formulas and $\alpha$ is a
${\mathcal F}_3$ s-formula, then
\[
\Gamma \models_s \alpha \qquad \Leftrightarrow \qquad \Gamma \rhd_{\mathcal F_3} \alpha.
\]
\end{theorem}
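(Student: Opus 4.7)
The plan is to assemble this final theorem as a straightforward corollary of the soundness observations and the three completeness lemmas established earlier in the subsection. The theorem has two directions, so I would handle them separately.

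For the forward direction $\Gamma \rhd_{\mathcal F_3} \alpha \Rightarrow \Gamma \models_s \alpha$ (soundness), I would argue by induction on the structure of the natural deduction proof. The base case is (Axiom), which is trivially valid semantically. For the inductive step, each rule needs to preserve s-logical consequence: (conj1), (conj2), (trans), (neg), and ($\bot$) are inherited from the already-verified soundness of the $\mathcal F_2$ system (the reader was asked to check these when the $\mathcal F_2$ calculus was introduced, and their semantic soundness is routine from the definitions), while (disj1) and (disj2) have been explicitly shown sound in the lemma preceding the completeness development. This is the easy direction and essentially mechanical.

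For the backward direction $\Gamma \models_s \alpha \Rightarrow \Gamma \rhd_{\mathcal F_3} \alpha$ (completeness), I would split into three cases according to the dichotomy used throughout the subsection. If $\Gamma$ is satisfiable and $\alpha$ is a positive s-formula $C \strictif E$, the conclusion follows directly from Lemma \ref{lem:positive}. If $\Gamma$ is satisfiable and $\alpha$ is a negative s-formula $C \not\strictif G$, the conclusion follows from the subsequent lemma handling that case. Finally, if $\Gamma$ is unsatisfiable, the last lemma of the subsection gives $\Gamma \rhd_{\mathcal F_3} \alpha$ for every $\mathcal F_3$ s-formula $\alpha$, so in particular for our given one; note that in this case the hypothesis $\Gamma \models_s \alpha$ holds vacuously, so no semantic input is needed.

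Since the three preceding lemmas cover all cases exhaustively and the soundness of each rule has been verified, no new argument is required — the obstacle, if any, is purely organizational, namely ensuring the case split genuinely covers every $(\Gamma,\alpha)$ pair. It does: a set $\Gamma$ is either satisfiable or unsatisfiable, and an $\mathcal F_3$ s-formula is either positive or negative. Thus the proof is simply a two-sentence citation of the prior results, and I would write it as such.
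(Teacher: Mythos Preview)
Your proposal is correct and matches the paper's approach exactly: the paper also presents this theorem without a standalone proof, simply stating ``Putting all results of this section together, we obtain'' and then the theorem. Your organization of the case split (soundness by induction on derivations using the already-verified rule soundness; completeness by the satisfiable/positive, satisfiable/negative, and unsatisfiable lemmas) is precisely the intended assembly.
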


\section{${\mathcal F_2}$ and Prolog}\label{sec:prolog}

In this section we show how standard Prolog may be used to deal with logical
consequence in $\mathcal{F}_2$. Since some readers might be unfamiliar with
Prolog, we recall here the basic constructs of this programming language
(restricting ourselves to the propositional setting), following \cite{Nerode}
(see \S I.10, and especially Definition 10.4).

Propositional Prolog deals with \emph{Horn clauses} (finite sets of literals
containing at most one positive literal), thought as disjunctions of their
elements. When the Horn clause contains (exactly) one positive literal $\{Y,
\neg X_1, \dots, \neg X_n\}$ it is a \emph{program clause} and we write $Y
\pl X_1, \dots, X_n$. If $n>0$ we think that the program clause is
representing $X_1 \land \dots \land X_n \rightarrow Y$ and we call it a
\emph{rule}. If in the program clause we have $n=0$ it is a \textit{fact} and
we write $Y \pl$. If the Horn clause has only negative literals $\{\neg X_1,
\dots, \neg X_n\}$ we call it a \emph{goal} and write $\pl X_1, \dots, X_n$.
A \emph{Prolog program} is a set of program clauses.

The typical situation is that we are given a Prolog program, and we want to
know whether a conjunction of facts $Y_1, \dots, Y_k$ is logical consequence
of the given facts and rules. To this end we add the goal $\{\neg Y_1, \dots,
\neg Y_k\}$ to the program and ask whether the resulting set of Horn clauses
is unsatisfiable. This is the case if and only if applying the resolution
rule repeatedly to the elements of the set starting with the goal we obtain
the empty clause. Prolog works by searching all possible ways of applying the
resolution rule with these constraints: if the search succeeds we have a
\emph{refutation} of the goal from the program.

We can now go back to our study of the $\mathcal{F}_2$ fragment of s-logic.

\begin{definition}
Given a set $\Gamma$ of $\mathcal{F}_2$ s-formulas, define $Prolog(\Gamma^+)$
to be the following Prolog program:
\[
Prolog(\Gamma^+) = \{Z \pl A_1, \dots, A_n \mid A_1 \land \ldots \land A_n \strictif Z \in \Gamma^+\}.
\]
\end{definition}

We have:

\begin{lemma}\label{lemma:prolog}
Let $\Gamma$ be a set of $\mathcal{F}_2$ s-formulas and $A \strictif Y$ be a
$\mathcal{F}_2$ s-formula, where $A = A_1 \land \dots \land A_n$.
\begin{enumerate}[(i)]
 \item $\Gamma \models_s A \strictif Y$ if and only there is a refutation
     of the goal $\pl Y$ from the Prolog program
   \[
    Prolog(\Gamma^+) \cup \{A_1\pl, \dots, A_n\pl\};
    \]
 \item $\Gamma \models_s A \not\strictif Y$ if and only if there exists
     $Z_1 \land \dots \land Z_n \not\strictif W \in
   \Gamma^-$ and a refutation of the goal $\pl W$ from the Prolog program
   \[
   Prolog(\Gamma^+) \cup \{Y \pl A_1, \ldots, A_n, Z_1\pl, \dots, Z_n\pl\}.
   \]
\end{enumerate}
\end{lemma}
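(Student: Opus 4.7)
The plan is to reduce both parts to Lemma~\ref{lemma:logcons} together with the soundness and completeness of SLD-resolution for propositional Horn clauses (see e.g.\ \cite{Nerode}). Throughout we may assume that $\Gamma$ is satisfiable: by Corollary~\ref{cor:unsat} this is decidable from $\Gamma^+$ alone, and in the unsatisfiable case the right-hand sides of the biconditionals can be shown to hold by picking the negative s-formula witnessing unsatisfiability and using the refutation supplied by Lemma~\ref{lemma:logcons} applied to $\Gamma^+$. The real work is therefore to handle the satisfiable case.

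For (i), Lemma~\ref{lemma:logcons}(i) turns $\Gamma\models_s A\strictif Y$ into the propositional statement $\Gamma^+_{prop}\models A_1\land\dots\land A_n\rightarrow Y$, i.e.\ the unsatisfiability of
\[
\Gamma^+_{prop}\cup\{A_1,\dots,A_n,\neg Y\}.
\]
Because $\Gamma\subseteq\mathcal{F}_2$, each formula $B_1\land\dots\land B_k\rightarrow Z$ of $\Gamma^+_{prop}$ is equivalent to the Horn clause $\{Z,\neg B_1,\dots,\neg B_k\}$, i.e.\ to the program clause $Z\pl B_1,\dots,B_k$ of $Prolog(\Gamma^+)$; the unit hypotheses $A_i$ correspond to the facts $A_i\pl$ and $\neg Y$ to the goal $\pl Y$. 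So the unsatisfiability above is literally the unsatisfiability of the Horn set $Prolog(\Gamma^+)\cup\{A_1\pl,\dots,A_n\pl,\pl Y\}$, which by soundness and completeness of SLD-resolution is equivalent to the existence of a Prolog refutation of $\pl Y$ from $Prolog(\Gamma^+)\cup\{A_1\pl,\dots,A_n\pl\}$.

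For (ii), I apply Lemma~\ref{lemma:logcons}(ii): $\Gamma\models_s A\not\strictif Y$ iff there exists a negative s-formula in $\Gamma^-$, which in $\mathcal{F}_2$ has the form $Z_1\land\dots\land Z_m\not\strictif W$, such that
\[
\Gamma^+_{prop},\,A_1\land\dots\land A_n\rightarrow Y\models Z_1\land\dots\land Z_m\rightarrow W.
\]
Using the same translation as in (i), together with the observation that $A_1\land\dots\land A_n\rightarrow Y$ is again a Horn clause (hence the program clause $Y\pl A_1,\dots,A_n$), this is the unsatisfiability of
\[
Prolog(\Gamma^+)\cup\{Y\pl A_1,\dots,A_n,\ Z_1\pl,\dots,Z_m\pl,\ \pl W\},
\]
which by SLD-completeness is the desired refutation.

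The proof is mostly bookkeeping; the only substantive point is that staying inside $\mathcal{F}_2$ guarantees that every clause produced by the translation is Horn, so that Prolog's restricted resolution strategy is both applicable and complete. The main obstacle to watch out for is keeping track of which conjunction of variables plays which role (the hypothesis $A$ of the goal s-formula versus the hypothesis $Z_1\land\dots\land Z_m$ of the chosen witness in $\Gamma^-$), and re-indexing the latter with a fresh length $m$ to avoid a clash with the $n$ coming from $A$.
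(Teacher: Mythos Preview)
Your approach is essentially the paper's: reduce via Lemma~\ref{lemma:logcons}, observe that the $\mathcal{F}_2$ restriction makes every clause produced by the translation Horn, and invoke completeness of (SLD) resolution for propositional Prolog. The paper's proof is terser but follows exactly this line, and your added bookkeeping (spelling out the clause-by-clause correspondence, re-indexing the witness conjunction) is sound.

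There is, however, a genuine slip in your treatment of the unsatisfiable case for part~(i). You assert that when $\Gamma$ is unsatisfiable the right-hand side of (i) can be established ``by picking the negative s-formula witnessing unsatisfiability'', but the right-hand side of (i) does not mention any negative s-formula at all: it asks for a refutation of $\pl Y$ from $Prolog(\Gamma^+)\cup\{A_1\pl,\dots,A_n\pl\}$, a program that depends only on $\Gamma^+$ and on $A$. Concretely, take $\Gamma=\{X\strictif Z,\ X\not\strictif Z\}$ and let $A,Y$ be fresh propositional variables. Then $\Gamma$ is unsatisfiable, so $\Gamma\models_s A\strictif Y$ vacuously, yet $Prolog(\Gamma^+)\cup\{A\pl\}=\{Z\pl X,\ A\pl\}$ contains no clause with head $Y$, and hence admits no refutation of $\pl Y$. (Your argument \emph{does} go through for part~(ii), since there the witnessing negative s-formula from Corollary~\ref{cor:unsat} supplies precisely the $Z_i$'s and $W$ required on the right-hand side.) The paper's own proof simply applies Lemma~\ref{lemma:logcons} without addressing satisfiability, so this is really a gap in the statement of the lemma rather than a flaw specific to your write-up; part~(i) as stated is only correct under the standing hypothesis that $\Gamma$ be satisfiable.
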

\begin{proof}
\begin{enumerate}[(i)]
 \item From Lemma \ref{lemma:logcons}.i we have that $\Gamma \models A
     \strictif Y$ if and only if $\Gamma^+_{prop}, A \models Y$. Since
     $\Gamma$ is a set of $\mathcal{F}_2$-formulas, the elements in
     $\Gamma^+_{prop}$ are (essentially) rules, while $A$ is equivalent to
     the conjunction of the facts $A_1\pl, \dots, A_n\pl$. Since $Y$ is a
     positive literal, the equivalence follows from the completeness of
     Propositional Prolog.
 \item From Lemma \ref{lemma:logcons}.ii we have that $\Gamma \models A
     \not\strictif Y$ if and only if there exists $Z_1 \land \dots \land
     Z_n \not\strictif W \in \Gamma^-$ such that
\[
\Gamma^+_{prop}, A \rightarrow Y, Z_1 \land \dots \land Z_n \models W.
\]
As before, the equivalence follows from interpreting this logical
consequence in terms of Prolog and applying the completeness of
Propositional Prolog.\qedhere
\end{enumerate}
\end{proof}

Lemma \ref{lemma:prolog} suggests an efficient way of checking logical
consequence between $\mathcal{F}_2$ s-formulas based on a well-known
programming language such as Prolog, and actually only for the special case
of goals consisting of a single literal.

\bibliographystyle{alpha}
\bibliography{logiczoo}

\end{document}